\title{
\textbf{The Classification Problem for}
$2$\textbf{-Forms in Four Variables}
}
\author{\textsc{J. Mu\~{n}oz Masqu\'e}
\and \textsc{L. M. Pozo Coronado}}
\date{}
\newtheorem{theorem}{Theorem}[section]
\newtheorem{proposition}[theorem]{Proposition}
\newtheorem{lemma}[theorem]{Lemma}
\newtheorem{corollary}[theorem]{Corollary}
\theoremstyle{remark}
\newtheorem{remark}[theorem]{Remark}
\newtheorem{example}[theorem]{Example}
\begin{document}

\maketitle

\begin{abstract}
\noindent The notion of \textit{type} of a differential $2$-form
in four variables is introduced and for $2$-forms of type $<4$,
local normal models are given. If the type of a $2$-form $\Omega $
is $4$, then the equivalence under diffeomorphisms of $\Omega $
is reduced to the equivalence of a symplectic linear frame
functorially attached to $\Omega $.
As the equivalence problem for linear parallelisms is known,
the present work solves generically the equivalence problem under
diffeomorphisms of germs of $2$-forms in $4$ variables.
\end{abstract}
\medskip
\noindent \textit{Mathematics Subject Classification 2010:\/}
Primary: 58A10; Secondary: 53A55, 58A20.
\medskip

\noindent \textit{Key words and phrases:\/} Class of a $2$-form,
differential $2$-form, jet bundle, linear frame, local model,
type of a $2$-form.
\section{Introduction and preliminaries\label{intro/prelim}}
\subsection{The type of a $2$-form}
\begin{lemma}
[$1$-form attached to a $2$-form]\label{lemma_omega}
Let $M$ be a smooth manifold of dimension $4$, and let
$p\colon O_M\to M$ be the dense open subbundle in
$\wedge ^2T^\ast M$ of $2$-covectors of maximal rank;
i.e.,
$O_M=\{ w\in\wedge ^2T^\ast M:\operatorname{rk}w=4\} $.
For every $2$-form $\Omega $ taking values in $O_M$
there exists a unique $1$-form $\omega _\Omega $ such that,
\begin{equation}
\omega _\Omega \wedge \Omega=d\Omega .
\label{omega/m}
\end{equation}
The assignment $\Omega \mapsto \omega _\Omega $
has a functorial character; i.e., for every
$\phi \in \operatorname{Diff}M$ we have
$\phi ^\ast \omega _\Omega =\omega _{\phi ^\ast\Omega }$.
\end{lemma}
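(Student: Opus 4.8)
The plan is to reduce the entire statement to a single pointwise fact of linear algebra and then propagate it. First I would observe that, at each point $x\in M$, wedging with the value $\Omega_x\in O_{M,x}$ defines a linear map
\[
L_{\Omega_x}\colon \wedge^1T_x^\ast M\to\wedge^3T_x^\ast M,
\qquad \alpha\mapsto\alpha\wedge\Omega_x,
\]
between two spaces of the same dimension $\binom{4}{1}=\binom{4}{3}=4$. I claim this map is an isomorphism precisely because $\operatorname{rk}\Omega_x=4$. To check injectivity I would pick a Darboux basis $e^1,\dots,e^4$ of $T_x^\ast M$ with $\Omega_x=e^1\wedge e^2+e^3\wedge e^4$ and expand $\alpha\wedge\Omega_x$ for $\alpha=\sum_i a_ie^i$; the four resulting $3$-covectors turn out to be the four distinct members $e^1\wedge e^3\wedge e^4$, $e^2\wedge e^3\wedge e^4$, $e^1\wedge e^2\wedge e^3$, $e^1\wedge e^2\wedge e^4$ of the standard basis of $\wedge^3T_x^\ast M$, so $\alpha\wedge\Omega_x=0$ forces every $a_i=0$. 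As domain and codomain have equal dimension, $L_{\Omega_x}$ is bijective.

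Next I would upgrade this to a statement about forms. Because $\Omega$ takes values in $O_M$, the maps $L_{\Omega_x}$ assemble into a vector-bundle isomorphism $L_\Omega\colon T^\ast M\to\wedge^3T^\ast M$, and its inverse is smooth: in local coordinates the entries of $L_\Omega^{-1}$ are, by Cramer's rule, cofactors divided by $\det L_\Omega$, a denominator that is a power of the Pfaffian of $\Omega$ and hence nowhere vanishing since $\Omega\wedge\Omega\neq0$. Since $d\Omega$ is a smooth section of $\wedge^3T^\ast M$, setting $\omega_\Omega:=L_\Omega^{-1}(d\Omega)$ produces a smooth $1$-form which is, by construction, the unique solution of $\omega_\Omega\wedge\Omega=d\Omega$: existence is surjectivity of $L_\Omega$, uniqueness is its injectivity. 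This establishes \eqref{omega/m}.

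Finally, for functoriality I would first note that $\phi^\ast$ is fibrewise a linear isomorphism on covectors, hence preserves pointwise rank, so $\phi^\ast\Omega$ again takes values in $O_M$ and $\omega_{\phi^\ast\Omega}$ is defined. Then I would simply pull back the defining equation: applying $\phi^\ast$ to $\omega_\Omega\wedge\Omega=d\Omega$ and using that $\phi^\ast$ commutes with $\wedge$ and with $d$ gives
\[
(\phi^\ast\omega_\Omega)\wedge(\phi^\ast\Omega)=d(\phi^\ast\Omega).
\]
Thus $\phi^\ast\omega_\Omega$ satisfies the equation characterizing $\omega_{\phi^\ast\Omega}$, and the uniqueness already proved, now applied to the $2$-form $\phi^\ast\Omega$, yields $\phi^\ast\omega_\Omega=\omega_{\phi^\ast\Omega}$.

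I expect no serious obstacle: the only substantive step is the pointwise isomorphism $L_{\Omega_x}$, and once that and its smooth dependence on $x$ are secured, both the solvability of \eqref{omega/m} and the functorial identity follow formally from uniqueness. The single point demanding a little care is the smoothness of $L_\Omega^{-1}$, which is why I would record the explicit Cramer's-rule/Pfaffian argument rather than rely on pointwise invertibility alone.
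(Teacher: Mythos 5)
Your proof is correct, but it takes a different route from the paper's. The paper's entire proof is a one-line explicit construction: it sets $\omega _\Omega =2\,i_X\Omega $, where $X$ is the vector field determined by $i_X\left( \Omega \wedge \Omega \right) =d\Omega $ --- legitimate because $\operatorname{rk}\Omega =4$ makes $\Omega \wedge \Omega $ a volume form, so $X\mapsto i_X(\Omega \wedge \Omega )$ is an isomorphism onto $3$-forms; the identity $i_X(\Omega \wedge \Omega )=2\,(i_X\Omega )\wedge \Omega $ then gives \eqref{omega/m} at once. You instead prove abstractly that $L_{\Omega _x}\colon \alpha \mapsto \alpha \wedge \Omega _x$ is a linear isomorphism $\wedge ^1T_x^\ast M\to \wedge ^3T_x^\ast M$ (your Darboux-basis computation, including the signs $e^3\wedge e^1\wedge e^2=e^1\wedge e^2\wedge e^3$ and $e^4\wedge e^1\wedge e^2=e^1\wedge e^2\wedge e^4$, is right), handle smoothness via Cramer's rule, and derive functoriality from uniqueness by pulling back the defining equation. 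The two arguments encode the same linear-algebra fact --- the paper's construction factors your $L_\Omega ^{-1}$ through the two contraction isomorphisms $X\mapsto i_X(\Omega \wedge \Omega )$ and $X\mapsto i_X\Omega $ --- but each buys something: yours is more self-contained and, unlike the paper's, makes uniqueness and the functoriality deduction fully explicit (the paper leaves both implicit, though they follow since its construction uses only natural operations); the paper's formula, in turn, gets smoothness and naturality for free without any Cramer's-rule discussion, and its vector field $X$ is essentially the field $Z_\Omega $ (with $i_{Z_\Omega }\Omega =\omega _\Omega $, so $Z_\Omega =2X$) that is reused repeatedly in the later proofs. One cosmetic point: your identification of $\det L_\Omega $ with a power of the Pfaffian, while true (it is $\operatorname{Pfaffian}(\Omega )^2$ in the natural coordinate bases), is not needed --- nonvanishing of the determinant already follows from the pointwise bijectivity you established, so Cramer's rule applies directly.
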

\begin{proof}
In fact, we have $\omega _\Omega =2\,i_X\Omega $,
where $X$ is the vector field defined by the equation
$i_X\left( \Omega \wedge \Omega \right) =d\Omega $.
\end{proof}
The $1$-form $\omega _\Omega $ is the opposite to
the `curvature covariant vector' introduced in \cite{Lee}.

Let $M$ be a manifold of dimension $4$.
Following \cite[VI, 1.3]{Godbillon} or
\cite[Appendix 4, 3.5]{LibermannMarle}, we define the class
of a differential form $\omega $ at a point $x\in M$
as the codimension of the subspace of the tangent vectors
$X\in T_xM$ such that
$i_X\omega =0$, $i_X(d\omega )=0$. Let $\mathcal{G}$
be the set of germs of differential $2$-forms of rank $4$
at a point $x_0\in M$ defined as follows:
$\Omega $ belongs to $\mathcal{G}$ if and only if
the germ of differential $1$-form $\omega _\Omega $
determined by the equation \eqref{omega/m} is not singular
at $x_0$, i.e., $(\omega _\Omega )_{x_0}\neq 0$,
and of constant class on a neighbourhood of $x_0$.
A germ $\Omega \in \mathcal{G}$ is said to be
of \textit{type} $t\in\{ 1,2,3,4\} $ if $\omega _\Omega $
is of class $t$. The germs of type $0$ are the closed
$2$-forms of rank $4$.
\subsection{Summary of contents}
The main goal of this article is to give a procedure
to decide when two germs of $2$-forms in four variables
are equivalent under diffeomorphisms.

In section \ref{types0/1/2} the models of germs
of $2$-forms of types $1$ and $2$ are given. In fact,
in Theorems \ref{type1} and \ref{type2} it is proved
that there exists a unique model for each of these two types.
This result is analogous to the Darboux' Theorem,
which applies to germs of $2$-forms of type $0$.

In section \ref{type3}, a model for germs of $2$-forms
of type $3$ is given. Furthermore, a finer classification
of germs of $2$-forms of type $3$ is given,
in terms of the class of a $1$-form $\varphi _\Omega $
associated to the original $2$-form $\Omega $.

In section \ref{inv1}, the notion of functions
on $2$-forms invariant under diffeomorphisms,
is introduced.

In section \ref{type4}, generic germs of $2$-forms
of type $4$ are studied.

The conditions for a general germ of $2$-form $\Omega $
to have an $\omega _\Omega $ in normal form, i.e.,
$\omega _\Omega =(1+x^1)dx^2+x^3dx^4$, are given. Finally,
a linear frame attached to a germ or $2$-form of type $4$
is given and a result is proved (Theorem \ref{TheBasis}),
stating that the equivalence of germs of $2$-forms
under diffeomorphims is reduced to the equivalence
under diffeomorphisms of the corresponding linear frames.
As the equivalence problem for linear parallelisms
is solved, this result thus fulfills our goal
in the generic case.
\section{Forms of types $1$ and $2$\label{types0/1/2}}
\begin{theorem}\label{type1}\emph{(cf.\ \cite[\S 2]{Debever})}
For every $\Omega \in\mathcal{G}$ of type $1$, there exists
a coordinate system
$(x^i)_{i=1}^4 $
centred at the origin such that,
\begin{equation}
\Omega=\exp (x^3)dx^1\wedge dx^2+dx^3\wedge dx^4.
\label{type1eq}
\end{equation}
\end{theorem}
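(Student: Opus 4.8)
The plan is to reduce the statement to the classical Darboux theorem for symplectic forms by peeling off the factor $\exp(x^3)$. The first step is to use that $\Omega$ has type $1$, i.e., $\omega_\Omega$ has class $1$ and $(\omega_\Omega)_{x_0}\neq0$. For a nonvanishing $1$-form of class $1$ the characteristic subspace has codimension $1$, so it must coincide with $\ker\omega_\Omega$; hence $i_Xd\omega_\Omega=0$ for every $X$ in the $3$-dimensional space $\ker\omega_\Omega$, and since the annihilator of $\ker\omega_\Omega$ is the line spanned by $\omega_\Omega$, this forces $d\omega_\Omega=0$. Thus $\omega_\Omega$ is closed and nonvanishing near $x_0$, and by the Poincaré lemma there is a function $x^3$, with $x^3(x_0)=0$ and $(dx^3)_{x_0}\neq0$, such that $\omega_\Omega=dx^3$. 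By the functoriality in Lemma \ref{lemma_omega}, any admissible chart must satisfy $\omega_\Omega=dx^3$, so $x^3$ is essentially forced (determined up to an additive constant).

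Next I would rewrite \eqref{omega/m} as $d\Omega=dx^3\wedge\Omega$ and set $\Omega'=\exp(-x^3)\,\Omega$. A direct computation gives $d\Omega'=-\exp(-x^3)\,dx^3\wedge\Omega+\exp(-x^3)\,d\Omega=0$, so $\Omega'$ is closed; since $\operatorname{rk}\Omega=4$ and $\exp(-x^3)\neq0$, the form $\Omega'$ is nondegenerate, hence symplectic.

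Then I would introduce $w=-\exp(-x^3)$, so that $dw=\exp(-x^3)\,dx^3$, in particular $dw_{x_0}\neq0$ and $w$ is a diffeomorphic function of $x^3$. The target normal form is then equivalent to $\Omega'=dx^1\wedge dx^2+dw\wedge dx^4$, because $\exp(x^3)\,dw=dx^3$, so multiplying such an expression by $\exp(x^3)$ recovers $\exp(x^3)\,dx^1\wedge dx^2+dx^3\wedge dx^4$. It thus remains to produce functions $x^1,x^2,x^4$ completing $w$ (equivalently $x^3$) to a coordinate system in which $\Omega'$ takes this Darboux form with $w$ prescribed as one member of a conjugate pair. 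This is exactly the relative (parametric) Darboux theorem: given a symplectic form and a function with nonvanishing differential, one may complete it to a Darboux chart having that function as a coordinate. Concretely, the Hamiltonian field $X_w$ defined by $i_{X_w}\Omega'=dw$ is nonzero, so one may choose $x^4$ with $\{w,x^4\}=1$ and extend $(w,x^4)$ to full Darboux coordinates $(x^1,x^2,w,x^4)$ giving $\Omega'=dx^1\wedge dx^2+dw\wedge dx^4$. Multiplying back by $\exp(x^3)$ yields the claimed form, and constant shifts of $x^1,x^2,x^4$ centre the chart at the origin.

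I expect the main obstacle to be precisely this relative Darboux step: the completion must be carried out while keeping the functorially fixed coordinate $x^3$ (equivalently $w$) prescribed as one leg of a symplectic pair, rather than being free to choose all four Darboux coordinates, and one must simultaneously arrange that the resulting chart is centred at the origin. By contrast, the earlier reductions—closedness of $\omega_\Omega$ from the class hypothesis and closedness of $\Omega'=\exp(-x^3)\,\Omega$—are routine computations.
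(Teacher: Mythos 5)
Your proposal is correct, but it takes a genuinely different route from the paper's. Both proofs start identically: the class-$1$ hypothesis makes $\omega _\Omega $ exact, $\omega _\Omega =df$ with $df\neq 0$ (the paper simply quotes this local model of class-$1$ forms, while you rederive it; your pointwise argument is sound, since a $2$-form annihilated by a hyperplane has rank $\leq 1$, hence rank $0$ by evenness of rank). After that the proofs diverge. The paper stays self-contained: it introduces the vector fields $Z$ (with $i_Z\Omega =\omega _\Omega $) and $Y$ (with $i_Y\Omega =dg$, where $Zg=1$), splits off the summand $df\wedge dg$, and shows by hand---via the key computation $i_Yd\Omega =-\tilde{\Omega }$ for $\tilde{\Omega }=\Omega +df\wedge dg$---that $\exp (-f)\tilde{\Omega }$ is a closed $2$-form of rank $2$, to which the ordinary Darboux theorem applies; setting $x^3=f$, $x^4=-g$ yields \eqref{type1eq}. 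You instead rescale the whole form, noting that $\Omega ^\prime =\exp (-f)\Omega $ is symplectic, substitute $w=-\exp (-f)$, and invoke the relative Darboux (Carath\'eodory--Jacobi--Lie) theorem to complete the prescribed function $w$ to a canonical chart; you correctly identify this completion as the crux, and the appeal is legitimate---the theorem is classical (it appears, e.g., in the Libermann--Marle monograph already cited in the paper), your Hamiltonian-field sketch is the standard proof, and the centering by constant shifts in $x^1,x^2,x^4$ works because such shifts preserve the Darboux form. What each approach buys: yours is shorter and conceptually transparent, exhibiting the theorem as ``conformal symplectization plus relative Darboux,'' at the cost of importing CJL as a black box; the paper's integrating-factor technique, though more computational, is precisely the machinery that is recycled for type $2$ in Lemma \ref{type2a}, where no single conformal rescaling closes the form and the black box is unavailable. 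In fact the two arguments meet at the end: rewriting the paper's conclusion as $\exp (-f)\Omega =\exp (-f)\tilde{\Omega }+dw\wedge d(-g)=dx^1\wedge dx^2+dw\wedge dx^4$ shows that the paper is proving, bare-handed, exactly the instance of the relative Darboux theorem that you invoke.
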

\begin{proof}
Let $Z$ be the vector field determined by
$i_Z\Omega=\omega _\Omega =\omega $.
Hence $i_Z\omega =0$ and then, $i_Zd\Omega =0$ as follows
from the formula \eqref{omega/m}. Moreover, by virtue
of the hypothesis, there exists a smooth function such that
$\omega =df$. Hence $L_Z\Omega =0$. As $Z$ is not singular
at the origin, there exists a smooth function $g$ such that
$Z(g)=1$. Furthermore, we can assume that $f$ and $g$ vanish
at the origin. Let $Y$ be the vector field defined by
$i_Y\Omega =dg$. We claim that $Y,Z$ are linearly
independent. In fact, from the very definition
of $f$ and $Y$ we obtain $df(Z)=dg(Y)=0$, and also,
\begin{equation}
df(Y)=i_Z\Omega (Y)=-i_Y\Omega (Z)=-dg(Z)=-1.
\label{df(Y)}
\end{equation}
Hence $(df\wedge dg)(Z,Y)=1$. Next, we prove that
$\exp (-f)$ is an integrating factor for
$\tilde{\Omega }=\Omega +df\wedge dg$. As
$d(\exp (-f)\tilde{\Omega })$ is a form of degree three
in four variables, it suffices to state
$i_Yd(\exp (-f)\tilde{\Omega })=0$ and
$i_Zd(\exp (-f)\tilde{\Omega })=0$.
We start with the latter equation:
$i_Zd(\exp (-f)\tilde{\Omega })
=i_Z(-\exp (-f)df\wedge\Omega +\exp (-f)d\Omega )=0$.
As for the former equation, taking account
of the equation \eqref{df(Y)} and the identity
$i_Y\tilde{\Omega }=0$, we have
\begin{align*}
i_Yd\bigl( \exp (-f)\tilde{\Omega }\bigr)
& =i_Y\bigl(-\exp (-f)df\wedge
\tilde{\Omega }+\exp (-f)d\Omega\bigr)\\
& =\exp (-f)\bigl( \tilde{\Omega }+i_Yd\Omega \bigr) .
\end{align*}
Hence it suffices to prove $\tilde{\Omega }+i_Yd\Omega =0$.
To do this, we first remark that the equation \eqref{omega/m}
can be rewritten as $d\Omega =i_Z\Omega \wedge\Omega $; hence
\[
i_Yd\Omega
=\bigl( i_Yi_Z\Omega \bigr) \Omega -i_Z\Omega \wedge i_Y\Omega
=\Omega (Z,Y)\Omega -df\wedge dg=-\tilde{\Omega }.
\]
According to Darboux's theorem, there exist functions $x^1,x^2$,
vanishing at the origin, such that
$\exp (-f)\tilde{\Omega }=dx^1\wedge dx^2$. Setting $x^3=f$
and $x^4=-g$, we obtain \eqref{type1eq}. As $\Omega $
is of rank $4$ we have $\Omega \wedge \Omega \neq 0$,
and the system $(x^i)_{i=1}^4$ is functionally independent.
\end{proof}
\begin{lemma}
\label{type2a}
For every $\Omega \in \mathcal{G}$ of type $2$, there exist
coordinates $(x^i)_{i=1}^4$ centred at the origin
and a smooth function $u=u(x^1,x^2,x^3)$, vanishing
at the origin, such that
\begin{equation}
\bigl( (u_{x^1})^2+(u_{x^2})^2\bigr) (0,0,0)>0,
\label{positive}
\end{equation}
\begin{equation}
\Omega
=\exp \left( (1+u)x^3\right) dx^1\wedge dx^2+dx^3\wedge dx^4.
\label{type2aeq}
\end{equation}
\end{lemma}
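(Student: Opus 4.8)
The plan is to follow the pattern of Theorem~\ref{type1}, producing coordinates in which $\Omega=e^{P}dx^1\wedge dx^2+dx^3\wedge dx^4$ with $P=P(x^1,x^2,x^3)$ and $P|_{x^3=0}=0$, and then to recover $u$ from $P$ by elementary means. First I would exploit the class hypothesis: since $\omega:=\omega_\Omega$ has class $2$ we have $\omega\wedge d\omega=0$, so by Frobenius (the Darboux normal form for $1$-forms of class $2$) one may write $\omega=\lambda\,dh$ locally, with $\lambda(x_0)\neq 0$ and $d\lambda\wedge dh\neq 0$; I normalise $h(x_0)=0$, and $h$ will eventually become $x^3$. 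The crucial departure from the type~$1$ argument is that the field $Z$ with $i_Z\Omega=\omega$ is \emph{not} a symmetry here, because $L_Z\Omega=d\omega\neq 0$. The remedy is to replace $Z$ by the field $\hat Z$ determined by $i_{\hat Z}\Omega=dh$ (so $\hat Z=\lambda^{-1}Z$): since $dh$ is closed one gets $L_{\hat Z}\Omega=i_{\hat Z}(\omega\wedge\Omega)=0$, exactly as in the type~$1$ proof, together with $\hat Z(h)=0$ and $\hat Z\neq 0$ at $x_0$.

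Next I would build the complementary data as in Theorem~\ref{type1}. Pick $g$ with $\hat Z(g)=1$ and $g(x_0)=0$, let $Y$ be defined by $i_Y\Omega=dg$, and set $\tilde\Omega=\Omega+dh\wedge dg$. A computation analogous to \eqref{df(Y)} gives $Y(h)=-1$ and $Y(g)=0$, so $Y,\hat Z$ are independent and $i_Y\tilde\Omega=i_{\hat Z}\tilde\Omega=0$; hence $\tilde\Omega$ is decomposable of rank $2$, annihilated by the plane field $\Delta:=\langle Y,\hat Z\rangle$. Using \eqref{omega/m} in the form $d\Omega=\omega\wedge\Omega$ and $d(dh\wedge dg)=0$ one checks the clean identity $d\tilde\Omega=\omega\wedge\tilde\Omega$, and wedging it with the annihilator of $\Delta$ shows, by Frobenius, that $\Delta$ is integrable.

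The heart of the matter is the integrating factor. I want $P$ with $d(e^{-P}\tilde\Omega)=0$, i.e.\ $(dP-\omega)\wedge\tilde\Omega=0$, which amounts to the overdetermined system $\hat Z(P)=0$, $Y(P)=-\lambda$ on the leaves of $\Delta$. This is the point at which the non-closedness of $\omega$ must be controlled: solvability is equivalent to the closedness, on each leaf, of the $1$-form $\beta$ with $\beta(\hat Z)=0$, $\beta(Y)=-\lambda$, whose only obstruction is $d\beta(\hat Z,Y)$. Expanding this and using $\hat Z(\lambda)=0$ leaves a single term proportional to the $Y$-component of $[\hat Z,Y]$. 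Both vanish for structural reasons: $\hat Z(\lambda)=0$ follows from $L_{\hat Z}\Omega=0$ and the functoriality of $\omega_\Omega$ in Lemma~\ref{lemma_omega} (whence $L_{\hat Z}\omega=0=(\hat Z\lambda)\,dh$), while $[\hat Z,Y]=0$ follows from $L_{\hat Z}(i_Y\Omega)=L_{\hat Z}(dg)=d(\hat Z(g))=0$ and the nondegeneracy of $\Omega$. With these identities $P$ exists smoothly (carrying the two transverse variables as parameters); moreover $\hat Z(P)=0$ forces $P$ to be independent of $g$, and $Y(P)=-\lambda$ gives $P_{x^3}=\lambda$.

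Finally I would conclude as in Theorem~\ref{type1}. The form $e^{-P}\tilde\Omega$ is closed, decomposable of rank $2$, so Darboux yields $\Delta$-basic functions $x^1,x^2$ with $e^{-P}\tilde\Omega=dx^1\wedge dx^2$; setting $x^3=h$, $x^4=-g$ gives $\Omega=e^{P}dx^1\wedge dx^2+dx^3\wedge dx^4$ with $P=P(x^1,x^2,x^3)$. A change of the $(x^1,x^2)$ coordinates taking the area form $e^{P(x^1,x^2,0)}dx^1\wedge dx^2$ to $dx^1\wedge dx^2$ normalises $P|_{x^3=0}=0$, a constant rescaling $x^3\mapsto\lambda(x_0)x^3$, $x^4\mapsto x^4/\lambda(x_0)$ arranges $P_{x^3}(x_0)=1$, and then Hadamard's lemma writes $P=(1+u)x^3$ with $u=u(x^1,x^2,x^3)$ vanishing at the origin, giving \eqref{type2aeq}. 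The remaining claim \eqref{positive} is exactly the statement that $\omega_\Omega$ has class \emph{precisely} $2$: since $(u_{x^1}^2+u_{x^2}^2)(0)=\lambda_{x^1}(0)^2+\lambda_{x^2}(0)^2$ and $d\omega=d\lambda\wedge dx^3$, positivity is equivalent to $d\omega(x_0)\neq 0$, which is the type~$2$ hypothesis. I expect the genuinely delicate step to be the existence of the integrating factor $P$—that is, verifying the two commutation identities $\hat Z(\lambda)=0$ and $[\hat Z,Y]=0$ that make the leafwise system $\hat Z(P)=0$, $Y(P)=-\lambda$ solvable—since everything else is a direct transcription of the type~$1$ computation.
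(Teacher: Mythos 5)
Your proposal is correct and is essentially the paper's own proof: both start from the class-$2$ Darboux normal form of $\omega _\Omega $, construct the pair of vector fields annihilating $\tilde{\Omega }=\Omega \pm dh\wedge dg$, find an integrating factor by solving the characteristic system $\hat{Z}(P)=0$, $Y(P)=-\lambda $ (the paper's $Z\rho =0$, $Y\rho +\rho (1+y^1)=0$ with $\rho =e^{-P}$), and finish with Darboux's theorem for the resulting closed rank-$2$ form followed by the same normalisations---killing $P|_{x^3=0}$ by a change of $(x^1,x^2)$, the constant rescaling of $x^3,x^4$, and Hadamard's lemma---with \eqref{positive} coming from $d\omega _\Omega (x_0)\neq 0$. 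Your only deviations are organisational and sound: you rescale $Z$ to $\hat{Z}=\lambda ^{-1}Z$ so that $L_{\hat{Z}}\Omega =0$ and verify the compatibility of the integrating-factor system explicitly via $\hat{Z}(\lambda )=0$ and $[\hat{Z},Y]=0$, whereas the paper proves the equivalent identity $L_Zi_Z\Omega =0$ directly (using $\omega _\Omega \wedge d\omega _\Omega =0$) and then appeals to the solvability of a non-singular linear first-order PDE on $C^\infty (y^1,y^2,y^3)$.
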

\begin{proof}
Let $Z$ be the vector field determined by
$i_Z\Omega =\omega _\Omega $. As $\Omega $ is of type $2$,
there exist smooth functions vanishing at the origin
such that $i_Z\Omega =(1+y^1)dy^3$. Hence $Zy^3=0$. As $Z$
is not singular, there exists $y^4$ vanishing at the origin,
such that $Zy^4=-(1+y^1)$. Let $Y$ be the vector field
defined by $i_Y\Omega =dy^4$. Hence $Yy^4=0$. We also have
$(1+y^1)(Yy^3)=i_Yi_Z\Omega =-i_Zi_Y\Omega =1+y^1 $,
so that $Yy^3=1$. Moreover, we claim that $Zy^1=0$. In fact,
this equation is equivalent to $L_Zi_Z\Omega =0$, since
$L_Zi_Z\Omega =(Zy^1)dy^3$. In order to prove that
$L_Zi_Z\Omega $ vanishes, we proceed as follows:
As $i_Zd\Omega =0$, we have $L_Zd\Omega=0$, or equivalently
$L_Z(i_Z\Omega \wedge \Omega )=0$;
that is, $L_Zi_Z\Omega \wedge \Omega
+i_Z\Omega \wedge L_Z\Omega =0 $, and the second term
on the left hand side vanishes, as
$i_Z\Omega \wedge L_Z\Omega =i_Z\Omega \wedge di_Z\Omega $,
and $i_Z\Omega $ is of class $2$. Hence
$L_Z(i_Z\Omega )\wedge \Omega =0$, and therefore
$L_Zi_Z\Omega =0$. We conclude that $y^1,y^3$ are first
integrals of $Z$ but $y^4$ is not. Consequently,
we can complete $y^1,y^3,y^4$ up to a system
of coordinates $(y^1,\dotsc,y^4)$ such that $Zy^2=0$.
Moreover, $Y,Z$ are linearly independent as
$(dy^3\wedge dy^4)(Z,Y)=1+y^1\neq 0$.

Similarly to the case of forms of type $1$, we look for
integrating factors for $\tilde{\Omega }=\Omega-dy^3\wedge dy^4$.
For every smooth function $\rho $ we have
\[
i_Zd\bigl( \rho\tilde{\Omega }\bigr)
=Z(\rho)\tilde{\Omega },\quad
i_Yd\bigl( \rho\tilde{\Omega }\bigr)
=\bigl( Y\rho+\rho(1+y^1)\bigr) \tilde{\Omega }.
\]
In fact, on one hand we have
$i_Zd(\rho\tilde{\Omega })=Z(\rho)\tilde{\Omega }
+\rho i_Zd\Omega =Z(\rho)\tilde{\Omega }$, and on the other,
$i_Yd(\rho\tilde{\Omega })
=Y(\rho)\tilde{\Omega }+\rho i_Yd\Omega $
and
\begin{align*}
i_Yd\Omega &
=i_Y(i_Z\Omega\wedge\Omega )
=\Omega (Z,Y)\Omega-i_Z\Omega\wedge i_Y\Omega \\
& =(1+y^1)\Omega -(1+x^1)dy^3\wedge dy^4\\
& =(1+y^1)\tilde{\Omega }.
\end{align*}
We thus conclude that $\rho$ is an integrating factor
for $\tilde{\Omega }$ if and only if $Z\rho =0$ and
$Y\rho +\rho (1+y^1)=0$. The former equation simply
means that $\rho =\rho (y^1,y^2,y^3)$.
As for the latter equation, it is a first-order
non-singular linear PDE in the unknown $\rho $
on the space $C^\infty (y^1,y^2,y^3)$. Hence the existence
of a positive integrating factor for $\tilde{\Omega }$
on such space, is guaranteed. According to Darboux's theorem,
we have $\rho \tilde{\Omega }=dt^1\wedge dt^2$; i.e.,
$\Omega =\exp (r)dt^1\wedge dt^2+dy^3\wedge dy^4$ with
$r=-\ln \rho $. Since $\Omega $ is of rank $4$, we conclude
that $t^1,t^2,y^3,y^4$ are functionally independent,
and taking the equations
$i_Y\tilde{\Omega }=i_Z\tilde{\Omega }=0$ into account,
we obtain $Yt^a=Zt^a=0$, $a=1,2$. On the coordinate system
$(t^1,t^2,y^3,y^4)$ we thus have
$Y=\partial/\partial y^3$ and $Z=(1+y^1)\partial/\partial y^4$,
with $y^1=f(t^1,t^2,y^3)$. Hence $r=r(t^1,t^2,y^3)$ and,
as a simple calculation shows, we have
$\omega _\Omega =i_Z\Omega =r_{y^3}dy^3$.
Since $\omega _\Omega (0)\neq 0$, we deduce that
$\lambda=r_{y^3}(0,0,0)\neq 0$.
Expanding $r$ up to the first order, we obtain
$r(t^1,t^2,y^3)=r_0(t^1,t^2)+u(t^1,t^2,y^3)y^3$,
with $\lambda=u(0,0,0)$. Making the change of variables
\[
\begin{array}
[c]{lll}
x^1=\int\exp \left( r_0(t^1,t^2)\right) dt^1, & x^2=t^2, &
x^i=y^i,\;i=3,4,
\end{array}
\]
we can assume $r_0\equiv0$ (hence $r=ux^3$), and substituting
$u/\lambda$ for $u$ and $\lambda x^3$ for $x^3$, we can also
assume $u(0,0,0)=1$. The condition \eqref{positive}
on the statement of the lemma follows from the fact that
$\omega _\Omega $ is of rank $2$. This completes the proof.
\end{proof}
\begin{theorem}
\label{type2}
For every $\Omega\in\mathcal{G}$ of type $2$, there exists
a coordinate system centred at the origin $(x^i)_{i=1}^4$,
such that
\begin{equation}
\Omega =\exp \left( (1+x^1)x^3\right) dx^1\wedge dx^2
+dx^3\wedge dx^4.
\label{type2eq}
\end{equation}
\end{theorem}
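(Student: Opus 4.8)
The plan is to promote Lemma~\ref{type2a} to the stated model by absorbing the function $u$ into the first coordinate. The positivity hypothesis \eqref{positive} says precisely that $du\wedge dx^3\neq0$ near the origin, so $(u,x^3)$ are functionally independent and $\omega_\Omega=(1+u)\,dx^3$ is already the Pfaff normal form of a class-$2$ form relative to the pair $(u,x^3)$. Since $\omega_\Omega$ is a diffeomorphism invariant by Lemma~\ref{lemma_omega} and the target \eqref{type2eq} has $\omega_\Omega=(1+x^1)\,dx^3$, I am led (and, up to reparametrising $x^3$, forced) to set $\bar x^1=u$ and $\bar x^3=x^3$: indeed $\omega_\Omega=(1+\bar x^1)\,d\bar x^3$ with $d\bar x^3$ a multiple of $dx^3$ means $\bar x^3=h(x^3)$, and matching the exponent $(1+\bar x^1)\bar x^3=(1+u)x^3$ then gives $\bar x^1=u$. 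It remains to produce complementary coordinates $\bar x^2,\bar x^4$ turning \eqref{type2aeq} into \eqref{type2eq}.

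First I would fix $\bar x^2$. On each slice $\{x^3=\mathrm{const}\}$ the function $u$ is a submersion, so the linear first-order equation $du\wedge d\bar x^2=dx^1\wedge dx^2$ (i.e.\ $u_{x^1}\bar x^2_{x^2}-u_{x^2}\bar x^2_{x^1}=1$) can be solved by characteristics along the Hamiltonian field of $u$, giving a coordinate $\bar x^2$ conjugate to $u$ and chosen to depend smoothly on $x^3$. With $\bar x^1=u$ this makes $\exp((1+\bar x^1)\bar x^3)\,d\bar x^1\wedge d\bar x^2$ agree with $\exp((1+u)x^3)\,dx^1\wedge dx^2$ up to a remainder $\exp((1+u)x^3)\,\eta\wedge dx^3$, where $\eta$ is a $1$-form in $dx^1,dx^2$ recording the $x^3$-variation of $\bar x^2$. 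I would then seek $\bar x^4=x^4+g$ so that $d\bar x^3\wedge d\bar x^4=dx^3\wedge dx^4$ supplies exactly this remainder; concretely $g$ must satisfy $dg\equiv\exp((1+u)x^3)\,\eta\pmod{dx^3}$, which prescribes the slice-derivatives $g_{x^1},g_{x^2}$ and leaves $g_{x^3}$ free.

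The main obstacle is the solvability of this last step: with $g_{x^1},g_{x^2}$ prescribed, $g$ exists only if the associated $2$-form is closed modulo $dx^3$. I expect to clear this integrability condition from the structural identity $d\Omega=\omega_\Omega\wedge\Omega$: since both \eqref{type2aeq} and the model carry the same invariant $\omega_\Omega=(1+u)\,dx^3$, the obstructing $2$-form is constrained to be a multiple of $dx^3$ and hence removable by the still-free derivative $g_{x^3}$ together with the $x^3$-dependence of $\bar x^2$. An alternative that sidesteps the bookkeeping is Moser's path method: after a rotation of $(x^1,x^2)$ making $\nabla_{x^1,x^2}u(0)$ point along $+\partial_{x^1}$, a transformation preserving the shape of both \eqref{type2aeq} and \eqref{type2eq}, the interpolation $u_s=(1-s)u+s\,x^1$ keeps $\nabla_{x^1,x^2}u_s\neq0$ for $s\in[0,1]$, so each $\Omega_s=\exp((1+u_s)x^3)\,dx^1\wedge dx^2+dx^3\wedge dx^4$ is of type $2$; one then solves $L_{X_s}\Omega_s=-\partial_s\Omega_s$ for a time-dependent field $X_s$ and integrates its flow. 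Here the non-closedness contributes the extra term $i_{X_s}d\Omega_s=(\omega_{\Omega_s}(X_s))\Omega_s-\omega_{\Omega_s}\wedge i_{X_s}\Omega_s$, which I would tame by requiring $X_s$ tangent to the slices $\{x^3=\mathrm{const}\}$. The crux in either route is exactly this solvability.
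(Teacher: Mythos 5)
There is a genuine gap, and it occurs in your very first step: the computation of the invariant. For $\Omega=\exp\left((1+u)x^3\right)dx^1\wedge dx^2+dx^3\wedge dx^4$ with $u=u(x^1,x^2,x^3)$, the defining equation \eqref{omega/m} gives $\omega_\Omega=r_{x^3}\,dx^3=\left(1+u+x^3u_{x^3}\right)dx^3$, where $r=(1+u)x^3$ --- \emph{not} $(1+u)\,dx^3$, because $u$ depends on $x^3$. (This is stated explicitly at the end of the proof of Lemma \ref{type2a}: $\omega_\Omega=r_{y^3}\,dy^3$.) Your own functoriality argument then backfires: if coordinates $(\bar x^i)$ with $\bar x^3=x^3$ put $\Omega$ in the model form \eqref{type2eq}, then computing $\omega_\Omega$ in the barred chart yields $(1+\bar x^1)\,dx^3$, so uniqueness of $\omega_\Omega$ forces $\bar x^1=u+x^3u_{x^3}=r_{x^3}-1$, which is exactly the paper's choice \eqref{y1y3}-(i); your choice $\bar x^1=u$ is therefore impossible whenever $u_{x^3}\not\equiv 0$. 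Your heuristic of ``matching the exponent'' is also invalid: under a change of $(x^1,x^2)$ the coefficient of $d\bar x^1\wedge d\bar x^2$ picks up a Jacobian factor (the paper's \eqref{eq12'}), so the exponents need not agree pointwise. Consequently, in your first route the remainder $\eta\wedge dx^3$ cannot in general be absorbed by $\bar x^4=x^4+g$: the obstruction you hope to ``clear'' from $d\Omega=\omega_\Omega\wedge\Omega$ is precisely the contradiction just described.

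Beyond this, the step you yourself call ``the crux'' --- solvability of the prescribed-gradient system for $g$, or of the Moser homological equation --- is asserted, not proven, and it is where the entire content of the theorem lies. After the correct choice $y^1=r_{x^3}-1$, $y^3=x^3$, the paper reduces the coefficient matching to a degenerate linear system \eqref{S4/1}--\eqref{S4/3} for $y^2$, whose compatibility condition \eqref{compatcondition} and integrability condition become two linear PDEs \eqref{eq'4} and \eqref{eq'5} for the function $C$ (your $g$); the decisive verification is that their joint compatibility, $r_{x^3}c+c_{x^3}=\rho c$, is an identity in $r$, followed by the Jacobian check $\exp(r-x^3r_{x^3})\neq 0$. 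Your Moser variant faces the same unresolved issue: since $d\Omega_s\neq 0$, the equation $L_{X_s}\Omega_s=-\partial_s\Omega_s$ carries the extra non-closedness terms you note, and demanding $X_s$ tangent to the slices $\{x^3=\mathrm{const}\}$ is a guess with no solvability proof. As written, the proposal both starts from a false normal form for $\omega_\Omega$ and leaves the integrability verification --- the actual theorem --- undone.
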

\begin{proof}
We start with the reduced equation for $\Omega $ given
in Lemma \ref{type2a}; namely,
 $\Omega =\exp (r)dx^1\wedge dx^2+dx^3\wedge dx^4$,
$r=(1+u)x^3$, and $u=u(x^1,x^2,x^3)$ satisfies
the inequation \eqref{positive}.

We look for a system of coordinates $(y^i)_{i=1}^4$
centred at the origin, such that
\begin{equation}
\Omega =\exp ((1+y^1)y^3)dy^1\wedge dy^2+dy^3\wedge dy^4.
\label{type2yeq}
\end{equation}
We set
\begin{equation}
\mathrm{(i)\;\,}y^1=r_{x^3}-1=u+x^3u_{x^3},
\qquad\mathrm{(ii)\;\,}
y^3=x^3.
\label{y1y3}
\end{equation}
Comparing the corresponding coefficients of $dx^i\wedge dx^j$,
$1\leq i<j\leq 4$, in \eqref{type2aeq} and in \eqref{type2yeq},
and taking \eqref{y1y3} into account, we obtain
\begin{align}
\exp \left( (1+y^1)y^3\right) \det \tfrac{\partial(y^1,y^2)}
{\partial(x^1,x^2)} & =\exp r,\label{eq12'}\\
\exp \left( (1+y^1)y^3\right) \det \tfrac{\partial(y^1,y^2)}
{\partial(x^1,x^3)}-\frac{\partial y^4}{\partial x^1} &
=0,\label{eq13'}\\
\exp \left( (1+y^1)y^3\right) \det \tfrac{\partial(y^1,y^2)}
{\partial(x^1,x^4)} & =0,\label{eq14'}\\
\exp \left( (1+y^1)y^3\right) \det \tfrac{\partial(y^1,y^2)}
{\partial(x^2,x^3)}-\tfrac{\partial y^4}{\partial x^2} &
=0,\label{eq23'}\\
\exp \left( (1+y^1)y^3\right) \det \tfrac{\partial(y^1,y^2)}
{\partial(x^2,x^4)} & =0, \label{eq24'}\\
\exp \left( (1+y^1)y^3\right) \det \tfrac{\partial(y^1,y^2)}
{\partial(x^3,x^4)}+\tfrac{\partial y^4}{\partial x^4} &
=1.
\label{eq34'}
\end{align}
From \eqref{y1y3}-(i) it follows that $y^1$ does not depend
on $x^4$; hence \eqref{eq14'} is equivalent to
$\partial y^1/\partial x^1\cdot\partial y^2/\partial x^4=0$,
\eqref{eq24'} is equivalent to
$\partial y^1/\partial x^2\cdot\partial y^2/\partial x^4=0$,
and \eqref{eq34'} is equivalent to
\begin{equation}
\exp ((1+y^1)y^3)\tfrac{\partial y^1}{\partial x^3}
\tfrac{\partial y^2}{\partial x^4}
+\tfrac{\partial y^4}{\partial x^4}=1.
\label{y4x4}
\end{equation}
If $\partial y^2/\partial x^4\neq 0$, then
$\partial y^1/\partial x^1=\partial y^1/\partial x^2=0$;
hence $y^1=y^1(x^3)$, which is absurd, as $y^1$ and $y^3$
are assumed to be independent. Accordingly,
$\partial y^2/\partial x^4=0$. From \eqref{y4x4} we thus deduce
$\partial y^4/\partial x^4=1$; hence
\begin{equation}
\mathrm{(i)\;\,}y^2=y^2(x^1,x^2,x^3),
\quad\mathrm{(ii)\;\,}y^4=x^4+C(x^1,x^2,x^3),
\label{y2y4}
\end{equation}
and the equations \eqref{eq14'}, \eqref{eq24'},
\eqref{eq34'} hold.
Hence the system \eqref{eq12'}--\eqref{eq34'}
reduces to the following:
\begin{align}
\det \tfrac{\partial(y^1,y^2)}{\partial(x^1,x^2)} &
=\exp (r-B),\label{x1x2}\\
\det \tfrac{\partial(y^1,y^2)}{\partial(x^1,x^3)} &
=\tfrac{\partial C}{\partial x^1}\exp (-B),\label{x1x3}\\
\det \tfrac{\partial(y^1,y^2)}{\partial(x^2,x^3)} &
=\tfrac{\partial C}{\partial x^2}\exp (-B),\label{x2x3}
\end{align}
with $B=(1+y^1)y^3$. Letting $a=(y^1)_{x^1}$, $b=(y^1)_{x^2}$,
and $c=(y^1)_{x^3}$, the previous system can be rewritten as
\begin{align}
a\tfrac{\partial y^2}{\partial x^2}
-b\tfrac{\partial y^2}{\partial x^1} & =\exp (r-B),\label{S4/1}\\
a\tfrac{\partial y^2}{\partial x^3}
-c\tfrac{\partial y^2}{\partial x^1} & =
\tfrac{\partial C}{\partial x^1}\exp (-B),\label{S4/2}\\
b\tfrac{\partial y^2}{\partial x^3}
-c\tfrac{\partial y^2}{\partial x^2} & =
\tfrac{\partial C}{\partial x^2}\exp (-B).
\label{S4/3}
\end{align}
The determinant of this system vanishes
and its compatibility condition reads
\begin{equation}
c\exp r-b\tfrac{\partial C}{\partial x^1}
+a\tfrac{\partial C}{\partial x^2}=0.
\label{compatcondition}
\end{equation}
If \eqref{compatcondition} holds and, for example, we assume
$c\neq 0$, then the system \eqref{S4/1}--\eqref{S4/3}
is equivalent to
\begin{align}
\tfrac{\partial y^2}{\partial x^1} &
=\tfrac{1}{c}\Bigl( a\tfrac{\partial y^2}
{\partial x^3}-\tfrac{\partial C}{\partial x^1}\exp (-B)\Bigr)
\label{S5/1}\\
\tfrac{\partial y^2}{\partial x^2} &
=\tfrac{1}{c}\Bigl( b\tfrac{\partial y^2}{\partial x^3}
-\tfrac{\partial C}{\partial x^2}\exp (-B)\Bigr)
\label{S5/2}
\end{align}
The integrability condition of this system is
\begin{equation}
\begin{array}
[c]{ll}
\!bc(y^2)_{x^1x^3}-ac(y^2)_{x^2x^3}
= & \left( bc_{x^1}-ac_{x^2}\right) (y^2)_{x^3}\\
& -\exp (-B)\\
& \cdot\left( c_{x^1}C_{x^2}
-c_{x^2}C_{x^1}+cB_{x^1}C_{x^2}
-cB_{x^2}C_{x^1}\right) .
\end{array}
\label{eq'1}
\end{equation}
Taking the derivative with respect to $x^3$
in the equation \eqref{S5/1} we obtain
\begin{align*}
bc(y^2)_{x^1x^3} &
=-\frac{bc_{x^3}}{c}\left( a(y^2)_{x^3}
-C_{x^1}\exp (-B)\right)
+a_{x^3}b(y^2)_{x^3}+ab(y^2)_{x^3x^3}\\
& \quad-bC_{x^1}\exp (-B)bC_{x^1x^3}\exp (B)
+bC_{x^1}B_{x^3}\exp (-B).
\end{align*}
Similarly, from the equation \eqref{S5/2} we obtain
\begin{align*}
ac(y^2)_{x^2x^3} &
=-\frac{ac_{x^3}}{c}\left( b(y^2)_{x^3}
-C_{x^2}\exp (-B)\right)
+ab_{x^3}(y^2)_{x^3}+ab(y^2)_{x^3x^3}\\
& \quad
+aC_{x^2}B_{x^3}\exp (-B).
\end{align*}
Subtracting these two equations, we have
\begin{align*}
bc(y^2)_{x^1x^3}-ac(y^2)_{x^2x^3} &
=\left( a_{x^3}b-ab_{x^3}\right) (y^2)_{x^3}\\
& \quad
+\frac{c_{x^3}}{c}\left( bC_{x^1}-aC_{x^2}\right)
\exp (-B)\\
& \quad
+\left( aC_{x^2x^3}-bC_{x^1x^3}\right)
\exp (-B)\\
& \quad
+\left( bC_{x^1}-aC_{x^2}\right) B_{x^3}\exp (-B).
\end{align*}
Comparing this equation to \eqref{eq'1}, we obtain
\begin{equation}
\begin{array}
[c]{ll}
0= & \frac{c_{x^3}}{c}\left( bC_{x^1}-aC_{x^2}\right)
+aC_{x^2x^3}-bC_{x^1x^3}\\
& +\left( bC_{x^1}-aC_{x^2}\right) B_{x^3}
+c_{x^1}C_{x^2}-c_{x^2}C_{x^1}\\
\multicolumn{1}{r}{} &
\multicolumn{1}{r}{+c\left( B_{x^1}C_{x^2}-B_{x^2}C_{x^1}\right) .}
\end{array}
\label{eq'2}
\end{equation}
Taking the derivative with respect to $x^3$ in \eqref{compatcondition}
and substituting the left hand side of this equation into \eqref{eq'2},
we obtain
\begin{equation}
Y(C)=\left( \exp (r)c\right) _{x^3},
\label{eq'4}
\end{equation}
where
\[
Y=\rho X,\quad \rho =\frac{c_{x^3}}{c}+1+y^1,
\quad
X=b\frac{\partial }{\partial x^1}
-a\frac{\partial }{\partial x^2}.
\]
Similarly, the equation \eqref{compatcondition}
can be written as
\begin{equation}
X(C)=\exp (r)c.\label{eq'5}
\end{equation}
To sum up, the unknown function $C=C(x^1,x^2,x^3)$ should verify
the two linear equations \eqref{eq'4} and \eqref{eq'5}
whose coefficients depend only on the function $u$.
The integrability condition of the system
\eqref{S5/1}--\eqref{S5/2}, which ensures the existence
of the function $y^2$, is thus reduced to the existence of $C$.

Moreover, the equations \eqref{eq'4} and \eqref{eq'5} admit
a common solution if and only if,
$(\exp (r)c)_{x^3} =Y(C)=\rho X(C)=\rho c$, or equivalently,
$r_{x^3}c+c_{x^3}=\rho c$. This equation is readily seen
to be an identity by simply substituting the expressions
for $c$, $c_{x^3}$, and $\rho$ in terms of $r$. Then
\[
\det \tfrac{\partial(y^1,y^2,y^3,y^4)}
{\partial(x^1,x^2,x^3,x^4)}
=\left\vert
\begin{array}
[c]{cccc}
r_{x^1x^3} & r_{x^2x^3} & r_{x^3x^3} & 0\\
(y^2)_{x^1} & (y^2)_{x^2} & (y^2)_{x^3} & 0\\
0 & 0 & 1 & 0\\
C_{x^1} & C_{x^2} & C_{x^3} & 1
\end{array}
\right\vert =\exp (r-x^3r_{x^3})\neq 0.
\]
\end{proof}
\section{Forms of type $3$\label{type3}}
\subsection{Generic normal form\label{generic_normal_forms}}
\begin{theorem}
\label{Type3}
If $\Omega \in \mathcal{G}$ is of type $3$, then there exists
a coordinate system $(x^i)_{i=1}^4$ centred at the origin
and a smooth function $f$ such that,
\begin{equation}
\Omega =\exp (f)\left( \exp \left( (1+x^1)x^3\right)
dx^1\wedge dx^2+dx^3\wedge dx^4\right) ,\quad f(0)=0.
\label{Omega'}
\end{equation}
\end{theorem}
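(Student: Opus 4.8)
The plan is to realise $\Omega$ as a conformal multiple of a form of type $2$, that is, to produce a function $f$ with $f(0)=0$ and a germ $\Omega_0$ of type $2$ with $\Omega=\exp(f)\,\Omega_0$; then Theorem \ref{type2} applied to $\Omega_0$ delivers \eqref{Omega'} immediately. The mechanism that makes this work is a short computation: setting $\Omega_0=\exp(-f)\,\Omega$ and using \eqref{omega/m} for $\Omega$, one finds $d\Omega_0=\exp(-f)(\omega_\Omega-df)\wedge\Omega=(\omega_\Omega-df)\wedge\Omega_0$, so by the uniqueness in Lemma \ref{lemma_omega} the $1$-form attached to $\Omega_0$ is $\omega_{\Omega_0}=\omega_\Omega-df$. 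Since $\Omega_0$ has the same rank $4$ as $\Omega$, the entire problem collapses to the task of choosing $f$ so that $\omega_\Omega-df$ is nonsingular at the origin and of constant class $2$; that is, the role of the conformal factor is precisely to lower the class of the attached $1$-form from $3$ to $2$.

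To construct $f$, write $\omega=\omega_\Omega$, which is of constant class $3$, so that $d\omega$ is closed of constant rank $2$ with $(d\omega)^2=0$. A closed $2$-form of constant rank has an integrable kernel, and Darboux's theorem supplies coordinates in which $d\omega=dx^1\wedge dx^3$, whence $\ker d\omega=\langle \partial/\partial x^2,\partial/\partial x^4\rangle$. Writing $\sigma=\omega-df$ and noting $d\sigma=d\omega$, the condition that $\sigma$ be of class at most $2$ is $\sigma\wedge d\sigma=\sigma\wedge d\omega=0$, i.e. that $\sigma$ annihilate $\ker d\omega$; in these coordinates this is exactly the overdetermined linear system $f_{x^2}=\omega_2$, $f_{x^4}=\omega_4$. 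Its integrability condition $\partial_{x^4}\omega_2=\partial_{x^2}\omega_4$ is the vanishing of the $dx^2\wedge dx^4$-coefficient of $d\omega$, which holds by the normal form just obtained. Integrating successively in $x^2$ and then in $x^4$ produces such an $f$ up to an arbitrary additive term $h(x^1,x^3)$ and an additive constant.

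It remains to secure the two genericity conditions so that Theorem \ref{type2} is applicable. Since $d\sigma=d\omega$ has constant rank $2$ and $\sigma\wedge d\sigma=0$ holds identically, $\sigma$ is of constant class exactly $2$ near the origin. Moreover $\sigma=(\omega_1-f_{x^1})\,dx^1+(\omega_3-f_{x^3})\,dx^3$ has only $dx^1,dx^3$ components, and the values $f_{x^1}(0),f_{x^3}(0)$ are prescribable through the first derivatives of the free term $h(x^1,x^3)$, so one may arrange $\sigma(0)\neq0$ while fixing $h(0)=0$ to get $f(0)=0$. Then $\Omega_0=\exp(-f)\,\Omega$ lies in $\mathcal{G}$ and is of type $2$, and Theorem \ref{type2} gives coordinates with $\Omega_0=\exp((1+x^1)x^3)\,dx^1\wedge dx^2+dx^3\wedge dx^4$; multiplication by $\exp(f)$ yields \eqref{Omega'}. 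The main obstacle is precisely the construction of the conformal factor: one must verify that the class drops from $3$ to $2$ upon subtracting an exact form, which hinges on the integrability of $\ker d\omega$ and the compatibility of the induced first-order system, while simultaneously retaining enough freedom to keep $\sigma$ nonsingular at the origin.
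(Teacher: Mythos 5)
Your proposal is correct, and its strategic skeleton is exactly the paper's: observe that $\Omega_0=\exp(-f)\,\Omega$ satisfies $\omega_{\Omega_0}=\omega_\Omega-df$ (by the uniqueness in Lemma \ref{lemma_omega}), choose $f$ so that the class drops from $3$ to $2$, and invoke Theorem \ref{type2}. Where you differ is in how $f$ is produced. The paper does it in one line: since $\omega_\Omega$ is of constant class $3$ and nonsingular, the Darboux normal form for $1$-forms gives coordinates with $\omega_\Omega=dt^1+t^2dt^3$, and one takes $f=t^1$. You instead re-derive the needed exact summand by hand: Darboux for the closed rank-$2$ form $d\omega$, the observation that $\sigma\wedge d\sigma=0$ amounts to $\sigma$ annihilating $\ker d\omega$, and the compatible overdetermined system $f_{x^2}=\omega_2$, $f_{x^4}=\omega_4$ whose integrability is precisely the vanishing of the $dx^2\wedge dx^4$-coefficient of $d\omega$. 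This is more work but buys two things. First, it is self-contained, using only the $2$-form Darboux theorem rather than the odd-class normal form for $1$-forms. Second---and this is a genuine merit---your explicit use of the residual freedom $h(x^1,x^3)$ to force $\sigma(0)\neq 0$ repairs a point the paper's proof glosses over: with the paper's literal normal form, $\omega_{\Omega^\prime}=\omega_\Omega-dt^1=t^2dt^3$ \emph{vanishes at the origin}, so $\Omega^\prime$ fails the nonsingularity requirement in the definition of $\mathcal{G}$ and Theorem \ref{type2} does not literally apply; the paper implicitly needs the variant normal form $\omega_\Omega=dt^1+(1+t^2)dt^3$ (obtained via $t^1\mapsto t^1-t^3$), after which $\omega_{\Omega^\prime}=(1+t^2)dt^3$ is nonsingular of class $2$. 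Your construction handles this genericity requirement explicitly and correctly, so the argument goes through as written.
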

\begin{proof}
As $\Omega $ is of type $3$, there exists a system
of coordinates centred at the origin $(t^i)_{i=1}^4$
such that $\omega _\Omega =dt^1+t^2dt^3$.
In this case, $\Omega ^\prime =\exp (-t^1)\Omega $ is
of type $2$ as $\omega _{\Omega ^\prime }
=\omega _\Omega -dt^1$, and we can apply Theorem
\ref{type2}.
\end{proof}
\subsection{A finer classification\label{FinerClassification}}
For every $2$-form $\Omega $ of type $3$ or $4$
on a manifold $M$ of dimension $4$, there exists
a unique $1$-form $\varphi _\Omega $ such that,
\begin{equation}
\varphi _\Omega \wedge\Omega
=\omega _\Omega \wedge d\omega _\Omega .
\label{varphi}
\end{equation}
The assignment $\Omega\mapsto\varphi _\Omega $ is functorial,
i.e., $\phi ^\ast (\varphi _\Omega ) =\varphi _{\phi ^\ast \Omega }$,
$\forall\phi \in\operatorname*{Diff}M$; cf.\ Lemma \ref{lemma_omega}.
If $\Omega \in\mathcal{G}$ is of type $1$ or $2$,
then $\varphi _\Omega =0$.

A $2$-form $\Omega\in\mathcal{G}$ of type $3$ is said to be
of type $3.k$, for $k=0,\dotsc,4$, if $\varphi _\Omega $
is of constant class equal to $k$ on a neighbourhood of the origin.

If a $2$-form $\Omega $ of type $3$ is given by the formula
\eqref{Omega'}, then
\begin{align}
\omega _\Omega & =df+(1+x^1)dx^3,\label{local_expression_1}\\
\varphi _\Omega & =-\frac{\partial F}{\partial x^4}dx^1
+\exp \left( -(1+x^1)x^3\right)
\frac{\partial F}{\partial x^2}dx^3,
\label{local_expression_2}\\
F & =\exp \left( -f\right) ,\;F(0)=1,\label{fF}
\end{align}
and as a computation shows, we have
\begin{proposition}
\label{finer_class}
A $2$-form $\Omega\in\mathcal{G}$ is of type $3.k$,
$0\leq k\leq 2$, if and only if there exist a smooth function
$F_k$ and a coordinate system $(y^i)_{i=1}^4$ centered
at the origin such that, $\Omega =(F_k)^{-1}
\left( \exp \left( (1+y^1)y^3\right) dy^1\wedge
dy^2+dy^3\wedge dy^4\right) $, $F_k(0)=1$ and
\begin{enumerate}
\item[\emph{(i)}] $(F_0)_{y^2}=(F_0)_{y^4}=0$,
\item[\emph{(ii)}] $F_{1}=1+y^4$,
\item[\emph{(iii)}] $(F_2)_{y^4}=0$.
\end{enumerate}
A $2$-form $\Omega =F^{-1}\left( \exp \left( (1+x^1)x^3\right)
dx^1\wedge dx^2+dx^3\wedge dx^4\right) \in\mathcal{G}$
is of type $3.3$ if and only if,
\begin{equation}
\left\{
\begin{array}
[c]{r}
\left( F_{x^2}F_{x^2x^4}
-F_{x^4}F_{x^2x^2}\right) ^2(0)
+\left( F_{x^2}F_{x^4x^4}
-F_{x^4}F_{x^2x^4}\right)^2(0)>0,\\
\\
F_{x^2x^2}F_{x^4x^4}-\left( F_{x^2x^4}\right) ^2=0,
\end{array}
\right.
\label{type_3.3}
\end{equation}
and it is of type $3.4$ if and only if
$F_{x^2x^2}(0)F_{x^4x^4}(0)
-\left( F_{x^2x^4}(0)\right) ^2\neq 0$.
\end{proposition}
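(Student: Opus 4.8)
The plan is to read off all five conditions from the explicit local expression \eqref{local_expression_2} of $\varphi_\Omega$, using that in dimension $4$ the class of a Pfaffian form is governed by the two exterior products $\varphi_\Omega\wedge d\varphi_\Omega$ and $d\varphi_\Omega\wedge d\varphi_\Omega$. Writing $G=\exp(-(1+x^1)x^3)>0$, I would first differentiate \eqref{local_expression_2} to obtain $d\varphi_\Omega$, and then compute the $3$-form and the $4$-form
\begin{align*}
\varphi_\Omega\wedge d\varphi_\Omega & =G\bigl(\Delta_1\,dx^1\wedge dx^2\wedge dx^3-\Delta_2\,dx^1\wedge dx^3\wedge dx^4\bigr),\\
d\varphi_\Omega\wedge d\varphi_\Omega & =2GH\,dx^1\wedge dx^2\wedge dx^3\wedge dx^4,
\end{align*}
where $\Delta_1=F_{x^2}F_{x^2x^4}-F_{x^4}F_{x^2x^2}$, $\Delta_2=F_{x^2}F_{x^4x^4}-F_{x^4}F_{x^2x^4}$ and $H=F_{x^2x^2}F_{x^4x^4}-(F_{x^2x^4})^2$. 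Since $G>0$ and $G(0)=1$, the vanishing loci of these coefficients do not involve $G$.

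The classes $3$ and $4$ then fall out at once. Recall that a $1$-form has class $4$ at a point exactly when $d\varphi_\Omega\wedge d\varphi_\Omega\neq0$ there, and class $3$ when $\varphi_\Omega\wedge d\varphi_\Omega\neq0$ while $d\varphi_\Omega\wedge d\varphi_\Omega=0$. Hence type $3.4$ is equivalent to $H(0)\neq0$, which is the stated inequality, and type $3.3$ is equivalent to $H\equiv0$ together with $\Delta_1(0)^2+\Delta_2(0)^2>0$, the latter being $(\varphi_\Omega\wedge d\varphi_\Omega)(0)\neq0$ because $G(0)=1$; this is precisely \eqref{type_3.3}. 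Constancy of the class on a neighbourhood is automatic: the open condition $\Delta_1^2+\Delta_2^2>0$ persists near the origin, whereas $H\equiv0$ is imposed identically.

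For the lower classes I would show that type $3.k$ with $k\le2$ is equivalent to $\varphi_\Omega\wedge d\varphi_\Omega\equiv0$, i.e.\ $\Delta_1=\Delta_2=0$. The three \emph{if} directions are one-line substitutions into \eqref{local_expression_2}: if $F$ is independent of $y^2$ and $y^4$ then $\varphi_\Omega=0$ (class $0$); if $F=1+y^4$ then $\varphi_\Omega=-dy^1$ is closed and nonzero (class $1$); and if $F_{y^4}=0$ then $\varphi_\Omega=GF_{y^2}\,dy^3$ is a functional multiple of $dy^3$, whence $\varphi_\Omega\wedge d\varphi_\Omega=0$ (class $\le2$). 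For the converses, note that where $F_{x^2}\neq0$ one has $\Delta_1=F_{x^2}^2\,\partial_{x^2}(F_{x^4}/F_{x^2})$ and $\Delta_2=F_{x^2}^2\,\partial_{x^4}(F_{x^4}/F_{x^2})$, so $\Delta_1=\Delta_2=0$ means $F_{x^4}/F_{x^2}=\mu(x^1,x^3)$ depends only on $x^1,x^3$; equivalently $F$ is a first integral of the vector field $\partial_{x^4}-\mu\,\partial_{x^2}$.

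The crux, and the step I expect to be the main obstacle, is the \emph{only if} direction for types $3.1$ and $3.2$: one must straighten this characteristic field to $\partial/\partial y^4$ so that $F$ becomes independent of $y^4$, while simultaneously keeping $\Omega$ in the model shape \eqref{Omega'}. This is not a free change of variables—it must lie in the stabilizer of the normal form produced in Theorem \ref{type2}—so, as in that proof, it reduces to solving a nonsingular first-order linear PDE system for the new coordinates, whose coefficients are built from $u$ and $\mu$. Once $F_{y^4}=0$ is achieved (type $3.2$), the finer alternatives are algebraic: $\varphi_\Omega=0$ forces $F_{y^2}=0$ as well, giving (i); the class-$1$ case $d\varphi_\Omega=0$ forces $F$ to be affine in the transverse variable, which a final translation and scaling normalizes to $F_1=1+y^4$, giving (ii); and the remaining generic class-$2$ situation $F_{y^4}=0$ with $F_{y^2}\not\equiv0$ gives (iii).
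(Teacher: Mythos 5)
Your computation of the two key products from \eqref{local_expression_2} is correct: with $G=\exp (-(1+x^1)x^3)$ one indeed finds
$\varphi _\Omega \wedge d\varphi _\Omega
=G\bigl( \Delta _1\,dx^1\wedge dx^2\wedge dx^3
-\Delta _2\,dx^1\wedge dx^3\wedge dx^4\bigr) $
and
$d\varphi _\Omega \wedge d\varphi _\Omega
=2GH\,dx^1\wedge dx^2\wedge dx^3\wedge dx^4$,
and this is exactly the unwritten computation the paper compresses into ``as a computation shows'' before the Proposition. Your treatment of types $3.3$ and $3.4$, including the remark that the class-$3$ condition is open at the origin while $H\equiv 0$ is imposed identically, is complete and agrees with \eqref{type_3.3}; the three ``if'' directions for $k\leq 2$ are likewise fine.

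The genuine gap is the one you name yourself: the ``only if'' direction for $k=1,2$ is never proved. You reduce it to ``solving a nonsingular first-order linear PDE system'' inside the stabilizer of the normal form, but you exhibit no system and verify no integrability, and that reduction is the entire mathematical content of these cases: the admissible changes of coordinates are those carrying $\exp ((1+y^1)y^3)dy^1\wedge dy^2+dy^3\wedge dy^4$ into a conformal multiple of itself, and proving such a change exists is precisely a compatibility analysis of the kind that occupies the whole proof of Theorem \ref{type2}. Two concrete defects in the sketch. First, your route to (ii) is internally inconsistent: you propose to reach $F_{y^4}=0$ and then pass to $F_1=1+y^4$ by ``a final translation and scaling''; but $(F_1)_{y^4}=1\neq 0$, so no translation or scaling connects the two shapes---with $F_{y^4}=0$ the closed case gives $\varphi _\Omega =h(y^3)dy^3$, and transforming this into $\varphi _\Omega =-dy^1$ while preserving the shape of $\Omega $ is again a nontrivial change inside the stabilizer, i.e.\ exactly the PDE step you deferred. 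Second, the quotient $\mu =F_{x^4}/F_{x^2}$ requires $F_{x^2}\neq 0$, which nothing guarantees; the intrinsic object is the smooth field $W=F_{x^2}\,\partial /\partial x^4-F_{x^4}\,\partial /\partial x^2$, in terms of which $\Delta _1=W(F_{x^2})$, $\Delta _2=W(F_{x^4})$, and $W$ is nonvanishing near $0$ because $\omega _\Omega \wedge d\omega _\Omega =\varphi _\Omega \wedge \Omega $ vanishes exactly where $F_{x^2}=F_{x^4}=0$, so type $3$ forces $(F_{x^2},F_{x^4})\neq (0,0)$. This last observation also exposes a degeneracy at $k=0$ that neither you nor the paper confronts: a type-$3$ form has $\varphi _\Omega $ nowhere zero near the origin, so constant class $0$ cannot occur, and your ``if'' direction for (i) actually produces a form whose $\omega _\Omega $ has class $\leq 2$---a form of type $1$ or $2$, not of type $3.0$.
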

\begin{example}
\label{Example3}
If $\Omega =\exp \left\{ x^4+(1+x^1)x^3\right\}
dx^1\wedge dx^2+\exp (\lambda)dx^3\wedge dx^4$, where
$\lambda\in C^\infty (x^1,\dotsc,x^4)$, then
$\omega={\frac{\partial\lambda}{\partial x^1}}dx^1
+{\frac{\partial\lambda }{\partial x^2}}dx^2+(1+x^1)dx^3+dx^4$.
If $\lambda _{x^2}=0$ and $\rho (0,0,0)\neq 0$, with
$\rho =1-\lambda _{x^1x^3}+(1+x^1)\lambda _{x^1x^4}$, then $\Omega $
is of class $3$ and $\varphi =\exp (-\lambda )\rho dx^1$.
Hence the class of $\varphi $ is $\leq 2$. The class of $\varphi $
is $1$ if and only if $\rho _{x^3}=\rho _{x^4}=0$. For example,
if $\lambda\in\mathbb{R}$, then
$\operatorname*{class}\varphi =1$; if $\lambda=x^1x^3x^4$, then
$\operatorname*{class}\varphi =2$.
\end{example}
\section{Invariant functions\label{inv1}}
Let $M$ be an arbitrary $C^\infty $-manifold and let
$\bar{\phi}\colon \wedge ^2T^\ast M\to \wedge ^2T^\ast M$
be the natural lift of a diffeomorphism
$\phi \in \operatorname*{Diff}M$; i.e.,
$\bar{\phi}(w)=(\phi ^{-1})^\ast w$
for every $2$-covector $w\in\wedge ^2T^\ast M$. If $\Omega $
is a $2$-form on $M$, then
$\bar{\phi }\circ \Omega \circ \phi ^{-1}
=(\phi ^{-1})^\ast \Omega $.
For every $r\geq 0$, let
\[
\begin{array}
[c]{l}
J^r\bar{\phi}
\colon J^r\bigl(\wedge ^2T^\ast M\bigr)
\to J^r\bigl(\wedge ^2T^\ast M\bigr)
\smallskip\\
J^r\bar{\phi }(j_x^r\Omega )
=j_{\phi (x)}^r(\bar{\phi }\circ \Omega \circ \phi ^{-1})
\end{array}
\]
be the $r$-jet prolongation of $\bar{\phi}$. A subset
$S\subseteq J^r(\wedge ^2T^\ast M)$ is said to be natural
if $\left( J^r\bar{\phi }\right) (S)\subseteq S$ for every
$\phi \in \operatorname*{Diff}M$. Let
$S\subseteq J^r(\wedge ^2T^\ast M)$ be a natural embedded submanifold.
A smooth function $I\colon S\rightarrow\mathbb{R}$ is said to be
invariant under diffeomorphisms or even
$\operatorname*{Diff}M$-invariant (cf.\ \cite{Kumpera}) if
\begin{equation}
I\circ J^r\bar{\phi}=I,\quad
\forall \phi \in \operatorname*{Diff}M.
\label{invariance1}
\end{equation}
If we set $\mathcal{I}(\Omega )=\mathcal{I}\circ j^r\Omega $,
for a given $2$-form $\Omega $ on $M$, then the invariance
condition \eqref{invariance1} reads
\begin{equation}
I\left( (\phi ^{-1})^\ast \Omega )(\phi (x)\right) =I(\Omega )(x),
\quad\forall x\in M,\;\forall\phi\in\operatorname*{Diff}M,
\label{invariance2}
\end{equation}
thus leading us to the naive definition of an invariant, as being a function
depending on the coefficients of $\Omega $ and its partial derivatives up to a
certain order, which remains unchanged under arbitrary changes of coordinates.
\begin{proposition}
Let $M$ be a smooth manifold of dimension $4$ and let $I$ be
the function defined by,
\[
\begin{array}
[c]{l}
I\colon J^2O_M\rightarrow\mathbb{R},
\smallskip\\
I(j_x^2\Omega )\omega _x\wedge\omega _x
=\left( d\omega _\Omega \right)_x
\wedge\left( d\omega _\Omega \right) _x.
\end{array}
\]
If $\omega _\Omega $ is of class $4$ (i.e., $\Omega $ is of type $4$,
according to \emph{\S \ref{intro/prelim}}) at $x$,
and $A_\Omega \colon TM\to TM$
is the endomorphism, $d\omega _\Omega (X,Y)=\Omega (A_\Omega
X,Y)=\Omega (X,A_\Omega Y)$, $\forall X,Y\in T_xM$, then
$\det \left( \lambda\mathrm{id}-\left( A_\Omega \right) _x\right)
=(\lambda ^2+I(j_x^2\Omega ))^2$.
\end{proposition}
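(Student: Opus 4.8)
The plan is to reduce the statement to a pointwise identity in the symplectic vector space $(T_xM,\Omega_x)$ and to prove there that $A_\Omega$ squares to a multiple of the identity. Fix $x\in M$, write $V=T_xM$, $\omega=\omega_\Omega$ and $\eta=(d\omega)_x\in\wedge^2V^\ast$. Since $\Omega\in O_M$ has rank $4$, $\Omega_x$ is a symplectic form on $V$ and $\Omega_x\wedge\Omega_x\neq0$, so (a $1$-form wedged with itself being zero) the defining relation is read as $\eta\wedge\eta=I(j_x^2\Omega)\,\Omega_x\wedge\Omega_x$; that is, $I$ is the proportionality factor between the two top-degree covectors $\eta\wedge\eta$ and $\Omega\wedge\Omega$. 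The first fact I would record is the linear constraint $\eta\wedge\Omega=0$: differentiating \eqref{omega/m} gives $0=d(d\Omega)=d(\omega\wedge\Omega)=d\omega\wedge\Omega-\omega\wedge d\Omega=d\omega\wedge\Omega-\omega\wedge\omega\wedge\Omega=d\omega\wedge\Omega$, and this is purely pointwise. Because $\Omega_x$ is nondegenerate, $A_\Omega$ is the unique endomorphism of $V$ with $\eta=\Omega(A_\Omega\cdot,\cdot)$; the antisymmetry of $\eta$ forces $\Omega(A_\Omega X,Y)=\eta(X,Y)=-\eta(Y,X)=\Omega(X,A_\Omega Y)$ (so both equalities in the statement hold automatically), while $\eta\wedge\Omega=0$ is precisely the vanishing of the trace, $\operatorname{tr}A_\Omega=0$.

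The crux of the argument is the algebraic identity
\begin{equation}
(A_\Omega)^2=-I(j_x^2\Omega)\,\mathrm{id}_V .
\label{plan_square}
\end{equation}
Granting \eqref{plan_square}, every eigenvalue $\mu$ of $A_\Omega$ satisfies $\mu^2=-I$, so the characteristic polynomial has the form $(\lambda-\sqrt{-I})^a(\lambda+\sqrt{-I})^b$ with $a+b=4$; the condition $\operatorname{tr}A_\Omega=0$ forces $a=b=2$, and the degenerate case $I=0$ gives $A_\Omega^2=0$ and hence $\lambda^4$. In all cases $\det(\lambda\,\mathrm{id}-A_\Omega)=(\lambda^2+I)^2$, which is the assertion. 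Thus the whole proposition is equivalent to \eqref{plan_square}.

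To prove \eqref{plan_square} I would argue in a Darboux basis $(e^1,\dots,e^4)$ of $\Omega_x$, so that $\Omega=e^1\wedge e^2+e^3\wedge e^4$ and, identifying $2$-covectors with skew matrices, $\Omega^{-1}=-\Omega$. Writing $\eta=\sum_{i<j}\eta_{ij}\,e^i\wedge e^j$, the constraint $\eta\wedge\Omega=0$ is the single relation $\eta_{12}+\eta_{34}=0$, and $I=\mathrm{Pf}(\eta)=\eta_{12}\eta_{34}-\eta_{13}\eta_{24}+\eta_{14}\eta_{23}$. Since $A_\Omega=\Omega^{-1}\eta$, the identity \eqref{plan_square} is equivalent to $\eta\,\Omega^{-1}\eta=-I\,\Omega$; forming $A_\Omega$ and squaring, the off-diagonal entries cancel in pairs (using $\eta_{34}=-\eta_{12}$) and the diagonal entries all equal $\eta_{12}^2+\eta_{13}\eta_{24}-\eta_{14}\eta_{23}=-\mathrm{Pf}(\eta)=-I$. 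A coordinate-free alternative is to invoke the Pfaffian-adjugate (Hodge-dual) identity $\eta\,\hat\eta=\mathrm{Pf}(\eta)\,\mathrm{id}$ for skew forms in four variables together with the primitivity $\eta\wedge\Omega=0$. I expect the one delicate point of the whole proof to be the sign and normalization bookkeeping here (the relation $\Omega^{-1}=-\Omega$, and the normalization of $\mathrm{Pf}$ against $\Omega\wedge\Omega$), since an error there would replace $(\lambda^2+I)^2$ by $(\lambda^2-I)^2$. Finally, no separate argument is needed for the constant term: $\det A_\Omega=\det\eta/\det\Omega=\mathrm{Pf}(\eta)^2/\mathrm{Pf}(\Omega)^2=I^2$, which serves as a consistency check on \eqref{plan_square}.
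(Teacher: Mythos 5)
Your proposal is correct, and it takes a genuinely different route from the paper's proof --- dual to yours in the choice of what to normalize. The paper uses the class-$4$ hypothesis to put $\omega _\Omega $ (not $\Omega $) in normal form, $\omega _\Omega =(1+x^1)dx^2+x^3dx^4$, so that $d\omega _\Omega $ becomes the standard skew matrix $H$ with $H_{12}=H_{34}=1$; it leaves $\Omega =\sum _{i<j}F_{ij}dx^i\wedge dx^j$ general, derives $F_{12}+F_{34}=0$ by differentiating \eqref{omega/m} (exactly your constraint $\eta \wedge \Omega =0$, i.e.\ $\operatorname{tr}A_\Omega =0$), writes out the $4\times 4$ matrix of $A_\Omega $ explicitly with entries $F_{ij}/\operatorname{Pf}(F)$, and reads off the characteristic polynomial $\left( \lambda ^2+\operatorname{Pf}(F_{ij})^{-1}\right) ^2$ directly, identifying $I(j_x^2\Omega )=\det (F_{ij})^{-1/2}$ in those coordinates. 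You instead fix a linear Darboux basis for $\Omega _x$, leave $\eta =(d\omega _\Omega )_x$ general, and prove the structural identity $A_\Omega ^2=-I(j_x^2\Omega )\,\mathrm{id}$, then pin the eigenvalue multiplicities by tracelessness; your matrix computation checks out, and it is consistent with the paper (the paper's displayed matrix does square to $-\operatorname{Pf}(F)^{-1}\mathrm{id}$). Each approach buys something. Yours is purely pointwise linear algebra: it never invokes the Darboux normal form for a $1$-form of class $4$, so it establishes the conclusion at every point where $\operatorname{rk}\Omega _x=4$ --- the class-$4$ hypothesis is not actually needed for this proposition --- and $A_\Omega ^2=-I\,\mathrm{id}$ carries strictly more information than the characteristic polynomial, exhibiting $A_\Omega $ as a scaled almost-complex or almost-product structure according to the sign of $I$, in the spirit of \cite{Lee}. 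The paper's computation, by contrast, delivers the closed-form expression $I=\operatorname{Pf}(F_{ij})^{-1}$ in the adapted coordinates of \S \ref{type4}, which is the form in which the invariant reappears later (e.g.\ in Example \ref{Example1}). Two points you handled correctly deserve note: the defining display, read literally, has $\omega _x\wedge \omega _x=0$ for a $1$-form, and both you and the paper's own proof construe it as $I(j_x^2\Omega )\,\Omega _x\wedge \Omega _x=(d\omega _\Omega )_x\wedge (d\omega _\Omega )_x$; and the sign worry you flag about $A_\Omega =\Omega ^{-1}\eta $ versus $-\Omega ^{-1}\eta $ is moot, since the conclusion depends only on $A_\Omega ^2$ and $\operatorname{tr}A_\Omega $, both invariant under $A_\Omega \mapsto -A_\Omega $.
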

\begin{proof}
Let $(x^1,\dotsc,x^4)$ be a system of coordinates centred at $x\in M$
such that, $\omega _\Omega =(1+x^1)dx^2+x^3dx^4$.
If $\Omega =\sum _{i<j}F_{ij}dx^i\wedge dx^j$,
then $F_{12}+F_{34}=0$, as differentiating \eqref{omega/m} it follows:
\[
0=d\omega _\Omega \wedge\Omega =\sum\nolimits_{i<j}F_{ij}
(dx^1\wedge dx^2+dx^3\wedge dx^4)\wedge dx^i\wedge dx^j.
\]
If we set $F_{ij}=-F_{ji}$ for $i\geq j$, and
$A_\Omega =A_{j}^idx^j\otimes\frac{\partial }{\partial x^i}$,
then $A_{j}^i=H_{ik}F^{kj}$,
with $H_{ij}=\delta_{1i}\delta_{2j}+\delta_{3i}\delta_{4j}$,
$1\leq i<j\leq 4$,
$H_{ij}+H_{ji}=0$, $i,j=1,\dotsc,4$, and
$\left( F^{ij}\right) _{i,j=1}^4$
is the inverse matrix of $\left( F_{ij}\right) _{i,j=1}^4$.
Hence
\[
\left( A_{j}^i\right) _{i,j=1}^4
=\det (F_{ij})^{-\frac{1}{2}}\left(
\begin{array}
[c]{cccc}
F_{34} & 0 & -F_{14} & F_{13}\\
0 & F_{34} & -F_{24} & F_{23}\\
F_{23} & -F_{13} & F_{12} & 0\\
F_{24} & -F_{14} & 0 & F_{12}
\end{array}
\right) ,
\]
where, $\det (F_{ij})^{\frac{1}{2}}
=\operatorname*{Pfaffian}(F_{ij})
=F_{12}F_{34}+F_{14}F_{23}-F_{13}F_{24}$,
and
\[
\det \left( \lambda\mathrm{id}-A_\Omega \right)
=\left( \lambda^2+\det (F_{ij})^{-\frac{1}{2}}\right) ^2,
\]
thus finishing the proof.
\end{proof}
\begin{remark}
The class of $\omega _\Omega $ is $\leq 3$ at $x\in M$
if and only if,
$I(j_x^2\Omega )=0$.
\end{remark}
\section{Forms of type $4$\label{type4}}
\subsection{Normal forms modulo $\operatorname{Aut}\omega _\Omega $}
Below, we use the standard notation about derivatives after a comma,
namely
$f_{\alpha,a}=\partial f_\alpha /\partial x^a$,
$f_{\alpha,ab}=\partial ^2f_\alpha /\partial x^a\partial x^b$, etc.

Let $\Omega =\sum _{i<j}F_{ij}dx^i\wedge dx^j$ be a germ
of differential $2$-form of type $4$ at $x_0\in M$ such that,
\begin{equation}
\omega _\Omega =\left( 1+x^1\right) dx^2+x^3dx^4,
\label{omega}
\end{equation}
where $(x^1,\dotsc,x^4)$ is a coordinate system centred at the origin.
By writing the equation \eqref{omega/m} in the system $(x^i)_{i=1}^4$,
where $\omega _\Omega $ is given as in \eqref{omega}, we conclude that
the former equation is equivalent to the following system:
\begin{equation}
\begin{array}
[c]{l}
F_{12,3}-F_{13,2}+F_{23,1}=-(1+x^1)F_{13},
\smallskip\\
F_{12,4}-F_{14,2}+F_{24,1}=x^3F_{12}-(1+x^1)F_{14},
\smallskip\\
F_{13,4}-F_{14,3}+F_{34,1}=x^3F_{13},
\smallskip\\
F_{23,4}-F_{24,3}+F_{34,2}=x^3F_{23}+(1+x^1)F_{34}.
\end{array}
\label{system1}
\end{equation}
Moreover, differentiating \eqref{omega/m} we obtain
$(dx^1\wedge dx^2+dx^3\wedge dx^4)\wedge\Omega =0$.
Hence $F_{34}=-F_{12}$ and we can use it to eliminate $F_{34}$
and its derivatives in \eqref{system1}, thus obtaining
the equivalent system,
\begin{equation}
\begin{array}
[c]{rl}
\text{$e$}_{1}\equiv & F_{13,2}-(1+x^1)F_{13}-F_{12,3}-F_{23,1}=0,
\smallskip\\
\text{$e$}_2\equiv & F_{12,4}-x^3F_{12}-F_{14,2}+(1+x^1)F_{14}
+F_{24,1}=0,
\smallskip\\
\text{$e$}_3\equiv & F_{13,4}-x^3F_{13}-F_{12,1}-F_{14,3}=0,
\smallskip\\
\text{$e$}_4\equiv & F_{12,2}-(1+x^1)F_{12}-F_{23,4}+x^3F_{23}
+F_{24,3}=0.
\end{array}
\label{system2}
\end{equation}
By performing the first prolongation of the system \eqref{system2}
a unique first-order constraint is obtained, namely,
\[
(1+x^1)F_{12,1}-x^3F_{12,3}+x^3F_{13,2}-(1+x^1)F_{13,4}
+(1+x^1)F_{14,3}-x^3F_{23,1}=0.
\]
\begin{proposition}
\label{proposition_system}
The general solution to the system
\emph{\eqref{system2}} is given by the following formulas:
\begin{equation}
F_{14}={\displaystyle\int_0^{x^3}}\left[ \left( X^4-x^3\right)
F_{13}-X^1F_{12}\right] dx^3+G_{14}\left( x^1,x^2,x^4\right)
,\label{F14}
\end{equation}
\begin{equation}
F_{23}={\displaystyle\int_0^{x^1}}\left[ \left( X^2-1-x^1\right)
F_{13}-X^3F_{12}\right] dx^1+G_{23}\left( x^2,x^3,x^4\right)
,\label{F23}
\end{equation}
\begin{equation}
F_{24}={\displaystyle\int_0^{x^1}}\left[ \left( x^3-X^4\right)
F_{12}+\left( X^2-1-x^1\right) F_{14}\right] dx^1+G_{24}
\left( x^2,x^3,x^4\right) ,
\label{F24}
\end{equation}
where $X^i=\frac{\partial }{\partial x^i}$, $1\leq i\leq 4$, and
\[
\begin{array}
[c]{l}
F_{12}(x^1,x^2,x^3,x^4),F_{13}(x^1,x^2,x^3,x^4),G_{14}
\left(
x^1,x^2,x^4\right) ,G_{23}\left( x^2,x^3,x^4\right) ,\\
\tilde{G}_{24}(x^2,x^4),
\end{array}
\]
are arbitrary smooth functions and $G_{24}$ is given by,
\[
G_{24}\left( x^2,x^3,x^4\right)
={\displaystyle\int_0^{x^3}}
\left( G_{23,4}-x^3G_{23}\right)
\left( x^2,x^3,x^4\right)
dx^3+\tilde{G}_{24}(x^2,x^4).
\]
\end{proposition}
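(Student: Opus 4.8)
The plan is to single out $F_{12}$ and $F_{13}$ as the free principal unknowns and to solve the four equations of \eqref{system2} one at a time, checking at the end that only one genuine obstruction survives. Recall that $F_{34}=-F_{12}$ has already been used to obtain \eqref{system2}, so the unknowns are $F_{12},F_{13},F_{14},F_{23},F_{24}$.

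First I would solve three of the four equations by a single quadrature each. Equation $e_3$ reads $F_{14,3}=F_{13,4}-x^3F_{13}-F_{12,1}$, so integrating in $x^3$ from $0$ gives exactly \eqref{F14}, the integration ``constant'' being an arbitrary function $G_{14}(x^1,x^2,x^4)$. Likewise $e_1$ gives $F_{23,1}=F_{13,2}-(1+x^1)F_{13}-F_{12,3}$ and $e_2$ gives $F_{24,1}=x^3F_{12}-F_{12,4}+F_{14,2}-(1+x^1)F_{14}$; integrating each in $x^1$ from $0$ yields \eqref{F23} and \eqref{F24}, with integration constants $G_{23}(x^2,x^3,x^4)$ and $G_{24}(x^2,x^3,x^4)$. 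At this stage $F_{14},F_{23},F_{24}$ are expressed through $F_{12},F_{13},G_{14},G_{23},G_{24}$, and $e_1,e_2,e_3$ hold identically.

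The crux is the remaining equation $e_4$. The key claim is that $e_4$ is automatically propagated in the $x^1$-direction once $e_1,e_2,e_3$ hold, i.e.
\[
\frac{\partial e_4}{\partial x^1}\equiv 0 \pmod{e_1,e_2,e_3}.
\]
To prove this I would differentiate $e_4$ in $x^1$, substitute $F_{23,1}$ and $F_{24,1}$ (and hence $\partial_4 F_{23,1}$ and $\partial_3 F_{24,1}$) from $e_1$ and $e_2$, and replace $F_{14,3}$ and $\partial_2 F_{14,3}$ by their values from $e_3$; after these substitutions every term cancels. This is the same mechanism as the single first-order constraint produced by the first prolongation of \eqref{system2}: that constraint equals $x^3 e_1$ modulo $e_3$, and so carries no information beyond $e_1,e_3$. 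I expect this cancellation to be the main obstacle, since it is the one genuinely computational step and has to be carried out in full to be convincing.

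Granting the identity above, $e_4$ depends only on $(x^2,x^3,x^4)$, so imposing $e_4=0$ is equivalent to imposing its restriction to $x^1=0$. There $F_{23}|_{x^1=0}=G_{23}$ and $F_{24}|_{x^1=0}=G_{24}$, whence $e_4|_{x^1=0}$ becomes a first-order linear ordinary differential equation in $x^3$ for $G_{24}$, with inhomogeneous term read off from $G_{23}$. Integrating it in $x^3$ from $0$ gives the displayed formula for $G_{24}$, the integration constant being the arbitrary function $\tilde{G}_{24}(x^2,x^4)$. Since $e_1,e_2,e_3$ were solved identically and $e_4$ has now been reduced to this single quadrature, the data $F_{12},F_{13},G_{14},G_{23},\tilde{G}_{24}$ remain free while $G_{24}$ is determined, which is precisely the asserted description of the general solution.
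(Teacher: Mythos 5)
Your quadratures for $e_3$, $e_1$, $e_2$ reproduce the paper's first step exactly, and your key propagation identity is true: substituting $F_{23,1}$, $F_{24,1}$, $F_{14,3}$ and their derivatives $\partial_4F_{23,1}$, $\partial_3F_{24,1}$, $\partial_2F_{14,3}$ from $e_1$, $e_2$, $e_3$ into $\partial e_4/\partial x^1$ does make every term cancel (your aside is also right: the paper's first-order constraint is $x^3e_1-(1+x^1)e_3$). This is a tidier packaging of what the paper does by substituting the three quadratures into $e_4$ explicitly. The gap is in your last step. The equation $e_4$ contains the terms $F_{12,2}-(1+x^1)F_{12}$ explicitly, and these do not disappear at $x^1=0$; what you actually get there is
\[
e_4|_{x^1=0}=\left( F_{12,2}-F_{12}\right) (0,x^2,x^3,x^4)
-G_{23,4}+x^3G_{23}+G_{24,3},
\]
so the inhomogeneous term of your quadrature is \emph{not} read off from $G_{23}$ alone, and integrating in $x^3$ gives
\[
G_{24}=\int_0^{x^3}\left[ G_{23,4}-x^3G_{23}
+\left( F_{12}-F_{12,2}\right) (0,x^2,x^3,x^4)\right] dx^3
+\tilde{G}_{24}(x^2,x^4),
\]
which is not the displayed formula. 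The discrepancy is real, not notational: take $F_{12}=x^2$, $F_{13}=0$, $G_{14}=G_{23}=\tilde{G}_{24}=0$. Then \eqref{F14}--\eqref{F24} with the Proposition's $G_{24}=0$ give $F_{14}=F_{23}=0$, $F_{24}=x^1x^2x^3$, and $e_4=1-x^2\neq 0$, whereas the corrected $G_{24}=(x^2-1)x^3$ does solve the system.

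You should know that this is precisely the point where the paper's own proof stumbles. After substitution into $e_4$ the paper reaches the relation
$\int_0^{x^1}\left[ F_{12}-\left( X^2-1-x^1\right) X^1F_{12}\right] dx^1
+(x^3-X^4)G_{23}+X^3G_{24}=\left( 1+x^1-X^2\right) F_{12}$,
and claims that differentiating with respect to $x^1$ yields $(x^3-X^4)G_{23}+X^3G_{24}=0$. But both sides of that relation have identical $x^1$-derivatives (fundamental theorem of calculus on the left, product rule on the right), so differentiation yields only $0=0$; the actual content is the value at $x^1=0$, namely $(x^3-X^4)G_{23}+X^3G_{24}=(1-X^2)F_{12}|_{x^1=0}$, and the boundary term $(F_{12}-F_{12,2})|_{x^1=0}$ is exactly the one you also dropped. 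So your method is sound and, carried out honestly, it \emph{corrects} Proposition \ref{proposition_system} (the displayed formula for $G_{24}$ is missing the $F_{12}$-dependent term) rather than proving it as stated; as written, your final step asserts a cancellation that does not happen.
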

\begin{proof}
From $e_{1}$, $e_2$, $e_3$ we obtain \eqref{F14}, \eqref{F23},
\eqref{F24}. Replacing \eqref{F14} into \eqref{F24}, the functions
$F_{14}$, $F_{23}$, and $F_{24}$ are written in terms of $F_{12}$
and $F_{13}$; the explicit expression for $F_{24}$ is
\begin{align*}
F_{24} & =G_{24}\left( x^2,x^3,x^4\right) +\int _0^{x^1}
\left( x^3-X^4\right) \left( F_{12}\right) dx^1\\
& +\int _0^{x^1}\left[ \left( X^2-1-x^1\right) \int _0^{x^3}
\left\{ \left( X^4-x^3\right) F_{13}-X^1F_{12}\right\}
dx^3\right] dx^1\\
& +\int_0^{x^1}\left( X^2-1-x^1\right) \left( G_{14}
\left( x^1,x^2,x^4\right) \right) dx^1.
\end{align*}
Replacing the expressions \eqref{F14}, \eqref{F23} and \eqref{F24}
into $e_1$, $e_2$, and $e_3$, these equations are seen to hold
identically.
Furthermore, replacing the aforementioned expressions into $e_4$,
after simplification, it follows:
\begin{multline*}
\int_0^{x^1}\left[ F_{12}-\left( X^2-1-x^1\right) X^1
F_{12}\right] dx^1+(x^3-X^4)G_{23}+X^3G_{24}\\
=\left( 1+x^1-X^2\right) F_{12},
\end{multline*}
and taking derivatives with respect to $x^1$ it follows:
$(x^3-X^4)G_{23}+X^3G_{24}=0$, and finally the expression
for $G_{24}$ in the statement is deduced.
\end{proof}
\subsection{The linear frame attached to $\Omega $}
\begin{proposition}
\label{ZandT}
Let $\Omega $ be a $2$-form $\Omega $ of rank $4$ on a manifold $M$
of dimension $4$, and let $Z_\Omega $, $T_\Omega $ be the vector fields
defined by $i_{Z_\Omega }\Omega =\omega _\Omega $,
$i_{T_\Omega }\Omega =\varphi _\Omega $,
then the following equations hold:
\begin{equation}
\begin{array}
[c]{rl}
\text{\emph{(i)}}
& \omega _\Omega \wedge d\Omega =0, \smallskip\\
\text{\emph{(iii)}}
& i_{Z_\Omega }d\Omega =0, \smallskip\\
\text{\emph{(v)}}
& \varphi _\Omega \wedge d\Omega =0, \smallskip\\
\text{\emph{(vii)}}
& \varphi _\Omega =-L_{Z_\Omega }\omega _\Omega ,
\smallskip\\
\text{\emph{(ix)}}
& i_{T_\Omega }d\omega _\Omega =I(\Omega )\omega _\Omega .
\end{array}
\begin{array}
[c]{rl}
\text{\emph{(ii)}} & d\omega _\Omega \wedge\Omega =0,
\smallskip\\
\text{\emph{(iv)}} & L_{Z_\Omega }\Omega =d\omega _\Omega ,
\smallskip\\
\text{\emph{(vi)}} & d\varphi _\Omega \wedge\Omega
=d\omega _\Omega \wedge d\omega _\Omega ,
\smallskip\\
\text{\emph{(viii)}} & \varphi _\Omega \wedge d\omega _\Omega
=-I(\Omega )\omega _\Omega \wedge\Omega,
\smallskip\\
&
\end{array}
\label{formulas}
\end{equation}
If $d\Omega\neq 0$, then
\begin{equation}
d\omega _\Omega \wedge d\varphi _\Omega
=-\tfrac{1}{2}Z_\Omega (I(\Omega ))\Omega \wedge \Omega .
\label{formula1}
\end{equation}
If $\omega _\Omega \wedge d\omega _\Omega \neq 0$, then
\begin{equation}
\Omega (Z_\Omega ,T_\Omega )=0,\label{Omega_Z_T}
\end{equation}
or equivalently, $\omega _\Omega (T_\Omega )
=\varphi _\Omega (Z_\Omega )=0$,
and the following equations hold:
\begin{equation}
\begin{array}
[c]{rl}
\text{\emph{(x)}} & \varphi _\Omega \wedge\omega _\Omega
=i_{T_\Omega }d\Omega,
\smallskip\\
\text{\emph{(xi)}} & L_{T_\Omega }\Omega
=d\varphi _\Omega +\varphi _\Omega \wedge\omega _\Omega ,
\smallskip\\
\text{\emph{(xii)}} & i_{[T_\Omega ,Z_\Omega ]}
\Omega =I(\Omega )\omega _\Omega -L_{Z_\Omega }\varphi _\Omega ,
\smallskip\\
\text{\emph{(xiii)}} & \left( i_{[T_\Omega ,Z_\Omega ]}\Omega\right)
\wedge\Omega =\left( d\varphi _\Omega -I(\Omega )\Omega\right)
\wedge \omega _\Omega ,
\smallskip\\
\text{\emph{(xiv)}} &
\omega _\Omega \left( \left[ T_\Omega ,Z_\Omega \right] \right) =0.
\end{array}
\label{formulas2}
\end{equation}
\end{proposition}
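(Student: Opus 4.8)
The plan is to derive every identity purely formally from the two defining equations \eqref{omega/m} and \eqref{varphi}, the definition of $I(\Omega)$, the nondegeneracy of $\Omega$, and Cartan's formula $L_X=i_Xd+di_X$; the only non-formal ingredient will be the algebraic relation satisfied by $A_\Omega$. Throughout I will use repeatedly that the map $\alpha\mapsto\alpha\wedge\Omega$ from $1$-forms to $3$-forms is injective, since $\Omega$ has rank $4$.

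First I would dispatch the closed identities. Identity (i) is immediate from \eqref{omega/m}, as $\omega_\Omega\wedge d\Omega=\omega_\Omega\wedge\omega_\Omega\wedge\Omega=0$; differentiating \eqref{omega/m} and using (i) gives (ii). For (iii) I would write $d\Omega=i_{Z_\Omega}\Omega\wedge\Omega$ and expand $i_{Z_\Omega}(i_{Z_\Omega}\Omega\wedge\Omega)$, whose two terms are $(i_{Z_\Omega}i_{Z_\Omega}\Omega)\Omega=0$ and $-\omega_\Omega\wedge\omega_\Omega=0$; then (iv) is Cartan's formula together with (iii). Identity (v) follows by rewriting $\varphi_\Omega\wedge d\Omega=\varphi_\Omega\wedge\omega_\Omega\wedge\Omega=-\omega_\Omega\wedge(\varphi_\Omega\wedge\Omega)$ and invoking \eqref{varphi}, while (vi) comes from differentiating \eqref{varphi} and using (v).

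The technical heart is the pair (vii)/(ix). For (vii) I note $i_{Z_\Omega}\omega_\Omega=\Omega(Z_\Omega,Z_\Omega)=0$, so $L_{Z_\Omega}\omega_\Omega=i_{Z_\Omega}d\omega_\Omega$; contracting (ii) with $Z_\Omega$ yields $(i_{Z_\Omega}d\omega_\Omega)\wedge\Omega=-\omega_\Omega\wedge d\omega_\Omega=-\varphi_\Omega\wedge\Omega$, and injectivity gives $i_{Z_\Omega}d\omega_\Omega=-\varphi_\Omega$, i.e.\ (vii). Reading the same computation through the relation $d\omega_\Omega(X,Y)=\Omega(A_\Omega X,Y)$ shows $\varphi_\Omega=i_{-A_\Omega Z_\Omega}\Omega$, whence $T_\Omega=-A_\Omega Z_\Omega$. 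Here the one genuinely non-formal step enters: from the explicit matrix of $A_\Omega$ displayed in the preceding Proposition one reads off $A_\Omega^2=-I(\Omega)\,\mathrm{id}$ (the characteristic polynomial $(\lambda^2+I(\Omega))^2$ already predicts this). Then (ix) is immediate: $i_{T_\Omega}d\omega_\Omega=\Omega(A_\Omega T_\Omega,\cdot)=\Omega(-A_\Omega^2 Z_\Omega,\cdot)=I(\Omega)\,\omega_\Omega$. I expect this identification of $T_\Omega$ with $-A_\Omega Z_\Omega$, together with $A_\Omega^2=-I(\Omega)\,\mathrm{id}$, to be the main obstacle, in the sense that it is the only point where the special structure of type $4$, rather than bare exterior algebra, is used.

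The remaining identities are then bookkeeping. Identity (viii) follows by contracting the defining relation $d\omega_\Omega\wedge d\omega_\Omega=I(\Omega)\,\Omega\wedge\Omega$ with $Z_\Omega$ and substituting (vii); applying $L_{Z_\Omega}$ to that same relation, with (iv) and (ii) forcing $L_{Z_\Omega}(\Omega\wedge\Omega)=0$, yields \eqref{formula1}. Equation \eqref{Omega_Z_T} is obtained by contracting \eqref{varphi} with $Z_\Omega$ and using (vii), the surviving term $\varphi_\Omega(Z_\Omega)\Omega$ being forced to vanish. With \eqref{Omega_Z_T} in hand, (x) is the expansion of $i_{T_\Omega}(\omega_\Omega\wedge\Omega)$, and (xi) is Cartan's formula plus (x). For (xii) I would apply $i_{[T_\Omega,Z_\Omega]}=L_{T_\Omega}i_{Z_\Omega}-i_{Z_\Omega}L_{T_\Omega}$ to $\Omega$, simplifying $L_{T_\Omega}\omega_\Omega=I(\Omega)\omega_\Omega$ via (ix) and $i_{Z_\Omega}L_{T_\Omega}\Omega=i_{Z_\Omega}d\varphi_\Omega=L_{Z_\Omega}\varphi_\Omega$ via (xi) and \eqref{Omega_Z_T}. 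Wedging (xii) with $\Omega$ produces (xiii) once $(L_{Z_\Omega}\varphi_\Omega)\wedge\Omega$ is computed by Lie-differentiating \eqref{varphi} in its two equal forms and feeding in (viii); this double Leibniz expansion is the fussiest sign computation of the proof. Finally (xiv) follows from the intrinsic formula $d\omega_\Omega(T_\Omega,Z_\Omega)=T_\Omega(\omega_\Omega(Z_\Omega))-Z_\Omega(\omega_\Omega(T_\Omega))-\omega_\Omega([T_\Omega,Z_\Omega])$, where the first two terms vanish by \eqref{Omega_Z_T} and the left-hand side vanishes by (ix) together with $\omega_\Omega(Z_\Omega)=0$.
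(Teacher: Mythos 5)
Your proposal is correct, and for items (i)--(vi), (x)--(xii) and (xiv) your computations essentially coincide with the paper's; the interest lies in three places where you fork off. For (vii) you contract (ii) with $Z_\Omega $ and invoke the injectivity of $\alpha \mapsto \alpha \wedge \Omega $, where the paper instead Lie-differentiates $\omega _\Omega \wedge \Omega =d\Omega $ and uses $L_{Z_\Omega }d\Omega =0$ --- equivalent in substance, yours a touch shorter. Your routes to \eqref{formula1} and \eqref{Omega_Z_T} are genuinely cleaner than the paper's: the paper obtains \eqref{formula1} by differentiating (viii) and contracting with $Z_\Omega $, which yields $\left( 2f+Z_\Omega (I(\Omega ))\right) d\Omega =0$ and hence needs the hypothesis $d\Omega \neq 0$, and it obtains \eqref{Omega_Z_T} by wedging $i_{T_\Omega }d\Omega $ with $\varphi _\Omega $, which yields $\Omega (Z_\Omega ,T_\Omega )\,\omega _\Omega \wedge d\omega _\Omega =0$ and hence needs $\omega _\Omega \wedge d\omega _\Omega \neq 0$; your derivations (applying $L_{Z_\Omega }$ to $d\omega _\Omega \wedge d\omega _\Omega =I(\Omega )\,\Omega \wedge \Omega $ --- the intended reading of the definition of $I$, whose printed form has $\omega \wedge \omega $ for $\Omega \wedge \Omega $ --- resp.\ contracting \eqref{varphi} with $Z_\Omega $ and using (vii)) need neither hypothesis, so you prove slightly more. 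The real divergence is (ix): the paper contracts (ii) with $T_\Omega $ and inserts (viii), a two-line argument valid on all of $O_M$, while you pass through $T_\Omega =-A_\Omega Z_\Omega $ and $A_\Omega ^2=-I(\Omega )\,\mathrm{id}$. That identification is correct and structurally illuminating, but two caveats: the characteristic polynomial $(\lambda ^2+I)^2$ alone does not force $A_\Omega ^2=-I\,\mathrm{id}$ (it leaves the minimal polynomial open), so the verification from the explicit matrix is indispensable --- it does check out --- and that matrix was computed in coordinates normalizing $\omega _\Omega $ to $(1+x^1)dx^2+x^3dx^4$, which exist only where $\omega _\Omega $ has class $4$; as literally written, your (ix) is therefore established only on the type-$4$ locus, whereas the statement and the paper's proof hold wherever $\Omega $ has rank $4$. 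This is easily repaired: either verify $A^2=-I\,\mathrm{id}$ pointwise in a Darboux basis for $\Omega $ with $d\omega _\Omega $ an arbitrary $2$-form satisfying $d\omega _\Omega \wedge \Omega =0$, or simply recover (ix) by the paper's contraction, which is available to you since your (viii) is derived independently of the $A_\Omega $ machinery. Finally, your (xiii), obtained by the double Leibniz expansion of $L_{Z_\Omega }$ applied to \eqref{varphi} together with (viii), differs from the paper's substitution of (vi) into (xii); the signs work out in both, the final rearrangement being $\omega _\Omega \wedge d\varphi _\Omega -I(\Omega )\,\omega _\Omega \wedge \Omega =\left( d\varphi _\Omega -I(\Omega )\Omega \right) \wedge \omega _\Omega $.
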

\begin{proof}
The formula (i) follows from the very definition of $\omega _\Omega $
in \eqref{omega/m}; differentiating this latter equation we obtain (ii).
From \eqref{omega/m} and the definition of $Z_\Omega $ we deduce
$i_{Z_\Omega }d\Omega =i_{Z_\Omega }(\omega _\Omega \wedge\Omega )=0$,
which is the formula (iii); hence
$L_{Z_\Omega }\Omega =di_{Z_\Omega }\Omega =d\omega _\Omega $,
which is (iv), and then
\[
\begin{array}
[c]{rll}
\varphi _\Omega \wedge d\Omega
= & \!\!\!\varphi _\Omega \wedge\omega _{\Omega }\wedge\Omega
& \text{[by virtue of \eqref{omega/m}]}\\
= & \!\!\!-\omega _\Omega \wedge\varphi _\Omega \wedge\Omega & \\
= & \!\!\!-\omega _\Omega \wedge\omega _\Omega \wedge d\omega _\Omega
& \text{[by virtue of \eqref{varphi}]}\\
= & \!\!\! 0, &
\end{array}
\]
thus proving (v). Taking (v) into account and differentiating
\eqref{varphi} we obtain (vi). Moreover, we have
\begin{align*}
\varphi _\Omega \wedge\Omega &
=i_{Z_\Omega }\Omega\wedge di_{Z_\Omega }\Omega
=i_{Z_\Omega }\Omega\wedge L_{Z_\Omega }\Omega =L_{Z_\Omega }
\left( i_{Z_\Omega }\Omega\wedge\Omega\right)
-L_{Z_\Omega } \left( i_{Z_\Omega }\Omega\right) \wedge\Omega \\
& =L_{Z_\Omega }d\Omega-\left( L_{Z_\Omega }\omega _\Omega \right)
\wedge\Omega.
\end{align*}
By virtue of (iv), $L_{Z_\Omega }d\Omega =dL_{Z_\Omega }\Omega
=d\left(
d\omega _\Omega \right) =0$;
hence $\left( \varphi _\Omega
+L_{Z_\Omega }\omega _\Omega \right) \wedge\Omega =0$,
and (vii) follows. Moreover, from (vii) we obtain
\[
\varphi _\Omega \wedge d\omega _\Omega
=-d\omega _\Omega \wedge L_{Z_\Omega }
\omega _\Omega =-d\omega _\Omega \wedge i_{Z_\Omega }d\omega _\Omega
=-\tfrac{1}{2}i_{Z_\Omega } \left( d\omega _\Omega \wedge d\omega _\Omega
\right)
\]
and (viii) follows from the very definitions of $I(\Omega )$ and $Z_\Omega $.
From (ii) and (viii), we have
$0=i_{T_\Omega }\left( d\omega _\Omega \wedge\Omega\right)
=i_{T_\Omega }d\omega _\Omega \wedge\Omega
+d\omega _\Omega \wedge\varphi _\Omega
=\left( i_{T_\Omega }d\omega _{\Omega
}-I(\Omega )\omega _\Omega \right) \wedge\Omega $. Hence (ix) follows.

Assume $d\Omega \neq 0$, and let $f$ be the function defined by, $d\omega
_\Omega \wedge d\varphi _\Omega =f\Omega \wedge\Omega $. By differentiating
(viii): $f\Omega\wedge\Omega =-d(I(\Omega ))\wedge d\Omega $, by virtue
of \eqref{omega/m} and (ii), and contracting with $Z_\Omega $ we obtain
$\left( 2f+Z_\Omega (I(\Omega ))\right) d\Omega =0$. Contracting (v) and
\eqref{omega/m} with $T_\Omega $, we obtain $\varphi _\Omega \wedge
i_{T_\Omega }d\Omega =0$,
$\Omega (Z_\Omega ,T_\Omega ) \Omega-\omega _\Omega \wedge\varphi _\Omega
=i_{T_\Omega }d\Omega$ , respectively.
Multiplying exteriorly the latter equation by $\varphi _\Omega $, we can
conclude that $\Omega (Z_\Omega ,T_\Omega )$ vanishes by virtue of the
assumption and taking the definition of $\varphi _\Omega $ into account. As
for the item (x), we have
\[
i_{T_\Omega }d\Omega =i_{T_\Omega }(\omega _\Omega \wedge\Omega )
=-\omega _\Omega \wedge i_{T_\Omega }\Omega =\varphi _\Omega \wedge
\omega _\Omega .
\]
Therefore, $L_{T_\Omega }\Omega
=di_{T_\Omega }\Omega+i_{T_\Omega }d\Omega
=d\varphi _\Omega +\varphi _\Omega \wedge\omega _\Omega $, thus proving
(xi). Moreover, (xii) is deduced as follows:
\[
\begin{array}
[c]{lll}
i_{[T_\Omega ,Z_\Omega ]}\Omega & \!\!
= i_{T_\Omega }L_{Z_\Omega }\Omega
-L_{Z_\Omega }i_{T_\Omega }\Omega
& \text{[by (iv)]}\\
& \!\! =i_{T_\Omega }d\omega _\Omega
-i_{Z_\Omega }di_{T_\Omega }\Omega
-d\left( i_{Z_\Omega }i_{T_\Omega }\Omega\right)
& \text{[as }
\Omega (Z_\Omega ,T_\Omega )=0\text{]}\\
& \!\! =i_{T_\Omega }d\omega _\Omega
-i_{Z_\Omega }d\varphi _\Omega & \\
& \!\! =I(\Omega )\omega _\Omega
-L_{Z_\Omega }\varphi _\Omega . & \text{[by (ix)]}
\end{array}
\]
From (xii) and (vi) we obtain
\begin{align*}
\left( i_{[T_\Omega ,Z_\Omega ]}\Omega \right) \wedge \Omega
& =\left(
I(\Omega )\omega _\Omega -i_{Z_\Omega }d\varphi _\Omega \right)
\wedge \Omega\\
& =I(\Omega )\omega _\Omega \wedge\Omega-i_{Z_\Omega }
\left( d\varphi _\Omega \wedge\Omega\right)
+d\varphi _\Omega \wedge\omega _\Omega \\
& =I(\Omega )\omega _\Omega \wedge\Omega-i_{Z_\Omega }
\left( I(\Omega )\Omega\wedge\Omega\right)
+d\varphi _\Omega \wedge\omega _\Omega \\
& =\left( d\varphi _\Omega -I(\Omega )\Omega \right)
\wedge\omega _\Omega ,
\end{align*}
thus proving (xiii). Finally, from (ix) we deduce
$i_{Z_\Omega }i_{T_{\Omega
}}d\omega _\Omega =0$, which is equivalent to (xiv) as we assume
$\omega _\Omega (T_\Omega )=0$.
\end{proof}
\begin{example}
\label{Example1}
If $\Omega =\exp (a)dx^1\wedge dx^2
+\exp (b)dx^3\wedge dx^4$, $a,b\in C^\infty (\mathbb{R}^4)$,
then
\begin{align*}
\tfrac{1}{2}\Omega\wedge\Omega
& =\exp (a+b)dx^1\wedge dx^2\wedge dx^3\wedge dx^4,\\
\omega _\Omega
& =b_{x^1}dx^1+b_{x^2}dx^2 +a_{x^3}dx^3+a_{x^4}dx^4,
\end{align*}
\[
d\omega _\Omega =c_{x^1x^3}dx^1\wedge dx^3+c_{x^1x^4}dx^1 \wedge
dx^4+c_{x^2x^3}dx^2\wedge dx^3+c_{x^2x^4}dx^2 \wedge dx^4,
\]
with $c=a-b$, and $I(\Omega )=\exp (-a-b)(c_{x^1x^4}c_{x^2x^3}
-c_{x^1x^3}c_{x^2x^4})$. Letting $a=x^3$, $b=0$ in the formulas
above, one has $I(\Omega )=0$. Similarly, letting $a=(1+x^1)x^3$, $b=0$,
one obtains $I(\Omega )=0$. This shows that the invariant $I$ does not
distinguish the types $1$ and $2$. Moreover, letting $u=c_{x^1}$,
$v=c_{x^2}$, the condition $I(\Omega )=0 $ is equivalent to saying that
$\frac{\partial(u,v)}{\partial(x^3,x^4)}=0$. Hence, $c_{x^2}=\phi
(x^1,x^2,c_{x^1})$, $\phi$ being an arbitrary smooth function on
$\mathbb{R}^3$.
\end{example}
\begin{theorem}
\label{TheBasis}
Let $Z_\Omega $, $T_\Omega $ be the vector fields attached
to a $2$-form $\Omega $ of type $4$ on $M$ according to
\emph{Proposition \ref{ZandT}}, and let $U_\Omega $
and $V_\Omega $ be the vector fields defined as follows:
$U_\Omega =\left[ Z_\Omega ,T_\Omega \right] $,
$V_\Omega =\left[ Z_\Omega ,U_\Omega \right]
=\left[ Z_\Omega ,
\left[ Z_\Omega ,T_\Omega \right] \right] $.
On a dense open subset $O_M^4\subset J^4O_M$,
the four vector fields $(Z_\Omega ,T_\Omega ,
U_\Omega ,V_\Omega )$ constitute a linear frame
for the tangent bundle of $M$. The function
$J(\Omega )$ defined by the formula
\begin{equation}
J(\Omega )
=\varphi _\Omega (U_\Omega )=\varphi _\Omega ([Z_\Omega ,T_\Omega ]),
\label{J}
\end{equation}
is a non-vanishing third-order differential invariant. Let
$(Z_{\Omega },T_\Omega ^\prime ,U_\Omega ^\prime ,V_\Omega ^\prime )$
be the linear frame defined by the following formulas:
\begin{align*}
T_\Omega ^\prime & =T-\tfrac{Z_\Omega (J(\Omega ))}{J(\Omega )}
Z_{\Omega },\\
U_\Omega ^\prime & =\tfrac{1}{J(\Omega )} U_\Omega
-\tfrac{\Omega (U_\Omega ,V_\Omega )}{J(\Omega )^2}Z_\Omega ,\\
V_\Omega ^\prime & =\tfrac{1}{J(\Omega )}V_\Omega .
\end{align*}
Two $2$-forms $\Omega $ and $\bar{\Omega }$, the $4$-jets
of which take values in $O_M^4$, are
$\operatorname*{Diff}M$-equivalent if and only if the linear frames
$(Z_\Omega ,T_\Omega ^\prime ,U_\Omega ^\prime ,V_{\Omega }^\prime )$
and $(Z_{\bar{\Omega }},T_{\bar{\Omega }}^\prime ,
U_{\bar{\Omega }}^\prime ,V_{\bar{\Omega }}^\prime )$
are $\operatorname*{Diff}M$-equivalent.
\end{theorem}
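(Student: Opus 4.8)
The plan is to show that $(Z_\Omega,T'_\Omega,U'_\Omega,V'_\Omega)$ is a \emph{canonical symplectic frame}: in its dual coframe $(\theta^1,\theta^2,\theta^3,\theta^4)$ the form $\Omega$ always acquires the \emph{same} constant expression $\Omega=\theta^1\wedge\theta^4+\theta^2\wedge\theta^3$. Granting this, both implications become formal. First I record that the whole assignment is functorial. From Lemma \ref{lemma_omega}, the functoriality of $\varphi_\Omega$ noted after \eqref{varphi}, the identity $\phi^\ast(i_X\alpha)=i_{\phi^\ast X}(\phi^\ast\alpha)$, and the uniqueness of $Z_\Omega,T_\Omega$ (ensured by $\operatorname{rk}\Omega=4$), one gets $\phi^\ast Z_\Omega=Z_{\phi^\ast\Omega}$ and $\phi^\ast T_\Omega=T_{\phi^\ast\Omega}$; since pullback commutes with the Lie bracket, also $\phi^\ast U_\Omega=U_{\phi^\ast\Omega}$ and $\phi^\ast V_\Omega=V_{\phi^\ast\Omega}$. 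Hence $J(\Omega)=\varphi_\Omega(U_\Omega)$ obeys $\phi^\ast J(\Omega)=J(\phi^\ast\Omega)$, i.e.\ it is a differential invariant; counting derivatives ($\omega_\Omega$ first order, $\varphi_\Omega$ and $T_\Omega$ second, $U_\Omega$ third) shows it has order $3$, while $V_\Omega$ has order $4$, which is why the frame lives in $J^4O_M$. The correcting functions $Z_\Omega(J(\Omega))$ and $\Omega(U_\Omega,V_\Omega)$ are functorial as well, so the modified frame satisfies $\phi^\ast(Z_\Omega,T'_\Omega,U'_\Omega,V'_\Omega)=(Z_{\phi^\ast\Omega},T'_{\phi^\ast\Omega},U'_{\phi^\ast\Omega},V'_{\phi^\ast\Omega})$.

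Next I evaluate $\Omega$ on $(e_1,e_2,e_3,e_4):=(Z_\Omega,T_\Omega,U_\Omega,V_\Omega)$ using Proposition \ref{ZandT}. Since $i_{Z_\Omega}\Omega=\omega_\Omega$ and $i_{T_\Omega}\Omega=\varphi_\Omega$, three entries are immediate: $\Omega(Z_\Omega,T_\Omega)=0$ by \eqref{Omega_Z_T}, $\Omega(Z_\Omega,U_\Omega)=\omega_\Omega([Z_\Omega,T_\Omega])=0$ by (xiv), and $\Omega(T_\Omega,U_\Omega)=\varphi_\Omega(U_\Omega)=J(\Omega)$ by \eqref{J}. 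For the three brackets I combine $W(\alpha(Y))=(L_W\alpha)(Y)+\alpha([W,Y])$ with Proposition \ref{ZandT}: using (vii), $L_{Z_\Omega}\omega_\Omega=-\varphi_\Omega$, and $\omega_\Omega(U_\Omega)=0$ gives $\Omega(Z_\Omega,V_\Omega)=\omega_\Omega([Z_\Omega,U_\Omega])=\varphi_\Omega(U_\Omega)=J(\Omega)$; using (xii) in the form $L_{Z_\Omega}\varphi_\Omega=I(\Omega)\omega_\Omega+i_{U_\Omega}\Omega$ together with $\omega_\Omega(U_\Omega)=0$ yields $(L_{Z_\Omega}\varphi_\Omega)(U_\Omega)=0$, whence $\Omega(T_\Omega,V_\Omega)=\varphi_\Omega([Z_\Omega,U_\Omega])=Z_\Omega(J(\Omega))$; and $\Omega(U_\Omega,V_\Omega)$ remains a free function. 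The Pfaffian expansion then gives $\tfrac12(\Omega\wedge\Omega)(Z_\Omega,T_\Omega,U_\Omega,V_\Omega)=J(\Omega)^2$, and since $\Omega\wedge\Omega\neq0$ (rank $4$), the four fields are a frame exactly where $J(\Omega)\neq0$. This open condition defines $O_M^4\subset J^4O_M$; it is dense because $J$ is a nontrivial function on the jet bundle, as a type-$4$ germ with $J(\Omega)\neq0$ produced from Proposition \ref{proposition_system} shows.

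Substituting the definitions of $T'_\Omega,U'_\Omega,V'_\Omega$ into these six values, every variable coefficient cancels. The rescalings by $1/J(\Omega)$ turn the two entries equal to $J(\Omega)$ into $1$: $\Omega(Z_\Omega,V'_\Omega)=\Omega(T'_\Omega,U'_\Omega)=1$. The shear $-\tfrac{Z_\Omega(J(\Omega))}{J(\Omega)}Z_\Omega$ in $T'_\Omega$ is tuned exactly to cancel $\Omega(T_\Omega,V_\Omega)=Z_\Omega(J(\Omega))$, giving $\Omega(T'_\Omega,V'_\Omega)=\tfrac{1}{J(\Omega)}\bigl(\Omega(T_\Omega,V_\Omega)-\tfrac{Z_\Omega(J(\Omega))}{J(\Omega)}\Omega(Z_\Omega,V_\Omega)\bigr)=0$; likewise the term $-\tfrac{\Omega(U_\Omega,V_\Omega)}{J(\Omega)^2}Z_\Omega$ in $U'_\Omega$ cancels $\Omega(U_\Omega,V_\Omega)$, giving $\Omega(U'_\Omega,V'_\Omega)=0$; and $\Omega(Z_\Omega,T'_\Omega)=\Omega(Z_\Omega,U'_\Omega)=0$ because $\Omega(Z_\Omega,T_\Omega)=\Omega(Z_\Omega,U_\Omega)=0$. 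Writing $(\theta^i)$ for the coframe dual to $(Z_\Omega,T'_\Omega,U'_\Omega,V'_\Omega)$, we conclude $\Omega=\theta^1\wedge\theta^4+\theta^2\wedge\theta^3$, with the same constant coefficients for every $\Omega$.

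Finally, the equivalence follows. If $\bar\Omega=\phi^\ast\Omega$, functoriality gives $(Z_{\bar\Omega},T'_{\bar\Omega},U'_{\bar\Omega},V'_{\bar\Omega})=\phi^\ast(Z_\Omega,T'_\Omega,U'_\Omega,V'_\Omega)$, so the modified frames are $\operatorname{Diff}M$-equivalent. Conversely, if some $\psi$ carries the modified frame of $\Omega$ to that of $\bar\Omega$, it carries the dual coframes to each other, $\bar\theta^i=\psi^\ast\theta^i$; since both forms have the identical constant expression in their own coframes, $\bar\Omega=\bar\theta^1\wedge\bar\theta^4+\bar\theta^2\wedge\bar\theta^3=\psi^\ast(\theta^1\wedge\theta^4+\theta^2\wedge\theta^3)=\psi^\ast\Omega$, so $\Omega$ and $\bar\Omega$ are equivalent. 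I expect the main obstacle to be the normalization step, namely the identity $\Omega(T_\Omega,V_\Omega)=Z_\Omega(J(\Omega))$ and the verification that the two shears and the rescalings cancel all variable coefficients; this rests on careful bookkeeping of (vii), (xii), (xiv) and \eqref{Omega_Z_T}, with the density of $O_M^4=\{J(\Omega)\neq0\}$ a secondary point.
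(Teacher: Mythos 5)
Your algebraic core is correct and is essentially the paper's own argument: you compute the same Gram matrix of $\Omega$ in the frame $(Z_\Omega,T_\Omega,U_\Omega,V_\Omega)$ using Proposition \ref{ZandT} --- the identities $\Omega (Z_\Omega ,V_\Omega )=J(\Omega )$ via (vii) and (xiv), and $\Omega (T_\Omega ,V_\Omega )=Z_\Omega (J(\Omega ))$ via (xii) and (xiv), exactly as in \eqref{Omega_Z_V} and \eqref{Omega_T_V} --- then the Pfaffian identity $(\Omega \wedge \Omega )(Z_\Omega ,T_\Omega ,U_\Omega ,V_\Omega )=2J(\Omega )^2$, the verification that the shears and rescalings reduce the matrix to the constant symplectic one, and the two implications by functoriality. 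Your coframe packaging $\Omega =\theta ^1\wedge \theta ^4+\theta ^2\wedge \theta ^3$ is a clean restatement of the paper's table of values, and your observation that the frame condition is \emph{equivalent} to $J(\Omega )\neq 0$ (the paper only uses the direction ``frame $\Rightarrow J\neq 0$'') is a genuine, if small, streamlining.

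The genuine gap is the density of $O_M^4$. You assert it ``because $J$ is a nontrivial function on the jet bundle,'' but a nontrivial smooth function can vanish on a nonempty open set, so nontriviality alone proves nothing. The paper needs, and supplies, two things you omit: first, that the components of $\Upsilon (j_x^4\Omega )=(Z_\Omega \wedge T_\Omega \wedge U_\Omega \wedge V_\Omega )_x$ are \emph{rational} (hence real-analytic) functions of the fibre coordinates $y_{ij,I}$, $|I|\leq 4$, so that the nonvanishing locus is dense as soon as it is nonempty; second, an explicit witness of nonemptiness. Your appeal to Proposition \ref{proposition_system} does not deliver the witness: that proposition only parameterizes the $2$-forms whose $\omega _\Omega $ is in normal form, and one must still choose specific data and actually compute $Z_\Omega ,T_\Omega ,U_\Omega ,V_\Omega $ at a point --- this is precisely the content of the paper's two-parameter family \eqref{example}, for which the frame determinant $-c^2\lambda ^2\bigl( (c^2+1)\lambda ^2+(c-c^2+2)\lambda +c+1\bigr) /(\lambda +1)^{10}$ is computed and shown to be nonzero for suitable $c,\lambda $. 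Without both the analyticity remark and an explicit example your proof establishes everything on $O_M^4$ but not that $O_M^4$ is dense --- i.e., not that the theorem solves the equivalence problem \emph{generically}, which is the point of the statement.
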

\begin{proof}
From the very definitions of the $1$-forms $\omega _\Omega $
and $\phi _\Omega $---in \eqref{omega/m} and \eqref{varphi},
respectively---it follows that
$\left( Z_\Omega \right) _x$ depends on $j_x^1\Omega $ and
$\left( T_\Omega \right) _x$ depends on $j_x^2\Omega $. Hence,
$\left( U_\Omega \right) _x$ depends on $j_x^3\Omega $ and
$\left( V_\Omega \right) _x$ depends on $j_x^4\Omega $.
Consequently the $\operatorname{Diff}M$-equivariant mapping
\[
\begin{array}
[c]{l}
\Upsilon \colon J^4O_M\to \wedge ^4TM,\\
\Upsilon \left( j_x^4\Omega\right)
=(Z_\Omega \wedge T_\Omega \wedge U_\Omega \wedge V_\Omega )_x,
\end{array}
\]
is well defined and, as a computation shows, once a point $x\in M$
has been fixed, the components of
$\Upsilon _x\colon J_x^4O_M\to \wedge _x^4TM$
are expressed as rational functions of the coordinate functions
$(y_{ij,I})$, $|I|\leq 4$, defined by $y_{ij,I}(j^4_x\Omega )
=\frac{\partial ^{|I|}(y_{ij}\circ \Omega )}{\partial x^I}(x)$.
In particular, the components of $\Upsilon _x$ are of class
$C^\Omega $. Hence, in order to prove that the open subset
$\Upsilon ^{-1}\left( \wedge ^4TM\backslash\{ 0\} \right) $
is dense it will suffice to prove that this subset is not empty.
In fact, if $\Omega =\sum _{i<j}F_{ij}dx^i\wedge dx^j$, where
\begin{equation}
\left.
\begin{array}
[c]{lll}
F_{12}=1+x^1,
& F_{13}=c\exp \left[ (1+x^1)x^2+x^3x^4\right] ,
& F_{14}=1, \medskip\\
F_{23}=-\lambda,
& F_{24}=\tfrac{1}{2}\lambda(x^3)^2-x^1
-\tfrac{1}{2}(x^1)^2,
& F_{34}=-F_{12},
\end{array}
\right\} \; c,\lambda \in \mathbb{R},
\label{example}
\end{equation}
then we have
$\omega _\Omega =(1+x^1)dx^2+x^3dx^4$,
$\varphi _{\Omega }=\varphi _idx^i$, with
\[
\begin{array}
[c]{llll}
\varphi _1=\frac{x^3F_{13}}{\lambda +F_{13}F_{24}},
& \varphi _2
=\frac{-\lambda x^3}{\lambda +F_{13}F_{24}},
& \varphi _3
=-\frac{(1+x^1)F_{13}}{\lambda +F_{13}F_{24}},
& \varphi _4
=-\frac{1+x^1}{\lambda +F_{13}F_{24}}.
\end{array}
\]
As the assignment
$\Omega \mapsto(Z_\Omega ,T_\Omega ,U_\Omega ,V_\Omega )$
is $\operatorname{Diff}M$-equivariant, we can confine ourselves
to study the rank of the system
$(Z_\Omega ,T_\Omega ,U_\Omega ,V_\Omega )$ at the point $x_0$.
If $\left( Z_\Omega \right) _{x_0}
=z^i\frac{{\small \partial}}{{\small \partial x}^i}|_{x_0}$,
$\left( T_{\Omega }\right) _{x_0}
=t^i\frac{{\small \partial}}{{\small \partial x}^i}|_{x_0}$,
$\left( U_\Omega \right) _{x_0}
=u^i\frac{{\small \partial }}{{\small \partial x}^i}|_{x_0}$,
$\left( V_\Omega \right) _{x_0}
=v^i\frac{{\small \partial}}{{\small \partial x}^i}|_{x_0}$,
then
\[
\begin{array}
[c]{llll}
z^1=0, & z^2=-\frac{\lambda }{\lambda +1}, & z^3=0,
& z^4=-\frac{1}{\lambda +1},
\end{array}
\]
\[
\begin{array}
[c]{llll}
t^1=-\frac{c}{\left( \lambda+1\right) ^2}, & t^2
=\frac{\lambda -1}{\left( \lambda+1\right) ^2}, & t^3
=\frac{c\lambda}
{\left( \lambda+1\right) ^2}, & t^4
=\frac{c^2+2}{\left( \lambda+1\right) ^2},
\end{array}
\]
\[
\begin{array}
[c]{llll}
u^1=\frac{c}{(\lambda+1)^2}, & u^2=-\frac{c\lambda(2\lambda+c)}
{(\lambda+1)^4}, & u^3=-\frac{c\lambda^2}{(\lambda+1)^3}, &
u^4=-\frac{ c\left[ (2c-1)\lambda^2+(2c+1)\lambda+c \right] }
{(\lambda+1)^4},
\end{array}
\]
\[
\begin{array}
[c]{ll}
v^1=-\frac{c}{\left( \lambda+1\right) ^2}, & v^2
=\frac{c\lambda
\left[ 4\lambda^2+(3c+1)\lambda+c-1 \right] }
{\left( \lambda+1\right) ^5},\\
v^3=\frac{c\lambda^3}{\left( \lambda+1\right) ^4}, & v^4
=\frac{c\left[ 2(2c-1)\lambda ^3+2(2c+1)\lambda ^2+(3c+2)\lambda +c\right] }
{\left( \lambda+1\right) ^5}.
\end{array}
\]

The determinant of $\left( \left( Z_\Omega \right) _{x_0},
\left( T_\Omega \right) _{x_0},\left( U_\Omega \right) _{x_0},
\left( V_\Omega \right) _{x_0}\right) $ in the basis
$\left( \frac{\partial }{\partial x^i}|_{x_0}\right) _{i=1}^4$
is equal to
\[
-c^2\lambda^2\frac{\left( c^2+1\right) \lambda^2+\left(
c-c^2+2\right) \lambda+c+1}{(\lambda+1)^{10}}.
\]
Hence it suffices to take $c\neq 0$, $\lambda\neq 0,-1$,
$\lambda\neq\frac{c^2-c-2\pm c\sqrt{c^2-6c-7}}{2(c^2+1)}$.

Moreover, if there exists $\phi \in \operatorname{Diff}M$
such that
$\phi ^\ast \Omega =\bar{\Omega }$, then
$\phi ^\ast \omega _\Omega =\omega _{\bar{\Omega }}$,
$\phi ^\ast \varphi _\Omega =\varphi _{\bar{\Omega }}$.
Hence
$\phi ^{-1}\cdot Z_\Omega =Z_{\bar{\Omega }}$,
$\phi ^{-1}\cdot T_\Omega =T_{\bar{\Omega }}$;
from the very definitions of $U_\Omega $ and $V_\Omega $
it thus follows:
$\phi ^{-1}\cdot U_\Omega =U_{\bar{\Omega }}$,
$\phi ^{-1}\cdot V_{\Omega }=V_{\bar{\Omega }}$.
Next, we compute the functions
$\Omega (Z_{\Omega },T_\Omega )$, $\Omega (Z_\Omega ,U_\Omega )$,
$\Omega (Z_\Omega ,V_{\Omega })$, $\Omega (T_\Omega ,U_\Omega )$,
$\Omega (T_\Omega ,V_\Omega )$. To do this, we use the properties
(i)-(xiv) of Proposition \ref{ZandT}. From
\eqref{Omega_Z_T} it follows: $\Omega (Z_\Omega ,T_\Omega )=0$.
Moreover, we have
\begin{equation}
\Omega (Z_\Omega ,U_\Omega )
=\Omega (Z_\Omega ,[Z_\Omega ,T_\Omega ])
=\omega _\Omega ([Z_\Omega ,T_\Omega ])\overset{\text{(xiv)}}{=}0,
\label{Omega_Z_U}
\end{equation}
\begin{equation}
\begin{array}
[c]{rll}
\Omega (Z_\Omega ,V_\Omega )
& =\Omega (Z_\Omega ,[Z_\Omega ,[Z_{\Omega },T_\Omega ]])
& \\
& =\omega _\Omega ([Z_\Omega ,[Z_\Omega ,T_\Omega ]])
& \\
& =i_{[Z_\Omega ,[Z_\Omega ,T_\Omega ]]}\omega _\Omega
& \\
& =L_{Z_\Omega }
\left( i_{[Z_\Omega ,T_\Omega ]}\omega _\Omega \right)
-i_{[Z_\Omega ,T_\Omega ]}
\left( L_{Z_\Omega }\omega _\Omega \right)
& \\
& =-i_{[Z_\Omega ,T_\Omega ]}(L_{Z_\Omega }\omega _\Omega )
& \text{by (xiv)} \\
& =i_{[Z_\Omega ,T_\Omega ]}\varphi _\Omega
& \text{by (vii)}\\
& =J(\Omega ).
&
\end{array}
\label{Omega_Z_V}
\end{equation}
In addition,
\begin{equation}
\begin{array}
[c]{rl}
\Omega (T_\Omega ,U_\Omega )
& =\Omega (T_\Omega ,[Z_\Omega ,T_\Omega ])\\
& =\varphi _\Omega ([Z_\Omega ,T_\Omega ])\\
& =J(\Omega ),
\end{array}
\label{Omega_T_U}
\end{equation}
and furthermore we obtain
\begin{align*}
\Omega (T_\Omega ,V_\Omega )
&
=\Omega (T_\Omega ,[Z_\Omega ,[Z_{\Omega },T_\Omega ]])
\smallskip\\
& =\varphi _\Omega ([Z_\Omega ,[Z_\Omega ,T_\Omega ]])
\smallskip\\
& =i_{[Z_\Omega ,[Z_\Omega ,T_\Omega ]]}\varphi _\Omega
\smallskip\\
& =L_{Z_\Omega }(i_{[Z_\Omega ,T_\Omega ]}\varphi _\Omega )
-i_{[Z_\Omega ,T_\Omega ]}(L_{Z_\Omega }\varphi _\Omega ),
\end{align*}
and from (xii) we deduce
$L_{Z_\Omega }\varphi _\Omega =I(\Omega )\omega _\Omega
+i_{[Z_\Omega ,T_\Omega ]}\Omega $, and again from (xiv)
we obtain
\begin{align*}
i_{[Z_\Omega ,T_\Omega ]}(L_{Z_\Omega }\varphi _\Omega )
& =I(\Omega )\omega _\Omega \left( [Z_\Omega ,T_\Omega ]
+\Omega ([Z_\Omega ,T_{\Omega
}],[Z_\Omega ,T_\Omega ] \right) \\
& =0.
\end{align*}
Consequently,
\begin{equation}
\Omega (T_\Omega ,V_\Omega )=Z_\Omega (J(\Omega )).
\label{Omega_T_V}
\end{equation}
Therefore, the matrix of $\Omega $ in the basis
$(Z_\Omega ,T_{\Omega },U_\Omega ,V_\Omega )$ is
\[
\Lambda(\Omega )=\left(
\begin{array}
[c]{cccc}
0 & 0 & 0 & J(\Omega )\\
0 & 0 & J(\Omega ) & Z_\Omega J(\Omega )\\
0 & -J(\Omega ) & 0 & \Omega\left( U_\Omega ,V_\Omega \right) \\
-J(\Omega ) & -Z_\Omega J(\Omega ) &
-\Omega\left( U_\Omega ,V_{\Omega }\right) & 0
\end{array}
\right) .
\]
Hence
$\left( \Omega\wedge\Omega\right)
\left( Z_\Omega ,T_{\Omega },U_\Omega ,V_\Omega \right)
=2J(\Omega )^2$. As $\Omega $ is of rank $4$,
it follows that $J(\Omega )$ cannot vanish,
and the definition of the modified basis
$(Z_\Omega ,T_\Omega ^\prime ,
U_\Omega ^\prime ,V_\Omega ^{\prime })$ makes sense.

It is readily seen that the functions $J(\Omega )$,
$Z_\Omega (J(\Omega ))$,
$\Omega\left( U_\Omega ,V_\Omega \right) $
are differential invariants. Hence the linear frame
$(Z_\Omega ,T_\Omega ^\prime ,U_\Omega ^\prime ,V_\Omega ^\prime )$
also depends functorially on $\Omega $. In addition, as
a calculation shows, from the formulas \eqref{Omega_Z_T},
\eqref{Omega_Z_U}--\eqref{Omega_T_V}, we have
\[
\begin{array}
[c]{lll}
\Omega \left( Z_\Omega ,T_\Omega ^\prime \right) =0,
& \Omega \left( Z_\Omega ,U_\Omega ^\prime \right) =0,
& \Omega \left( Z_{\Omega },V_\Omega ^\prime \right) =1,\\
\Omega \left( T_\Omega ^\prime ,U_\Omega ^\prime \right) =1, &
\Omega \left( T_\Omega ^\prime ,V_\Omega ^\prime \right) =0, &
\Omega \left( U_\Omega ^\prime ,V_\Omega ^\prime \right) =0.
\end{array}
\]
If
$\phi\cdot(Z_\Omega ,T_\Omega ^\prime ,
U_\Omega ^\prime ,V_\Omega ^\prime )
=(Z_{\bar{\Omega }},T_{\bar{\Omega }}^\prime ,
U_{\bar{\Omega }}^\prime ,
V_{\bar{\Omega }}^\prime )$, i.e.,
$\phi ^{-1}\cdot Z_\Omega =Z_{\bar{\Omega }}$,
$\phi ^{-1}\cdot T_\Omega ^\prime =T_{\bar{\Omega }}^\prime $,
$\phi ^{-1}\cdot U_\Omega ^\prime =U_{\bar{\Omega }}^\prime $,
$\phi ^{-1}\cdot V_\Omega ^\prime =V_{\bar{\Omega }}^\prime $,
then the forms $\phi ^\ast \Omega $ and $\bar{\Omega }$
take the same value on the pairs
$(Z_{\bar{\Omega }},T_{\bar{\Omega }}^\prime )$,
$(Z_{\bar{\Omega }},U_{\bar{\Omega }}^\prime )$,
$(Z_{\bar{\Omega }},V_{\bar{\Omega }}^\prime )$,
$(T_{\bar{\Omega }}^\prime ,U_{\bar{\Omega }}^\prime )$,
$(T_{\bar{\Omega }}^\prime ,V_{\bar{\Omega }}^\prime )$,
$(U_{\bar{\Omega }}^\prime ,V_{\bar{\Omega }}^\prime )$,
as
$(\phi ^\ast \Omega )(Z_{\bar{\Omega }},T_{\bar{\Omega }}^\prime )
=(\phi ^\ast \Omega )(\phi ^{-1}\cdot Z_\Omega ,
\phi ^{-1}\cdot T_\Omega ^\prime )
=\Omega \left( Z_\Omega ,T_{\Omega }^\prime \right) \circ \phi =0$,
and similarly for the remaining pairs. Hence
$\bar{\Omega }=\phi ^\ast \Omega $, thus concluding the proof.
\end{proof}
\begin{corollary}
Let $M$ be a $4$-dimensional real analytic manifold,
let $\Omega $ and $\bar{\Omega }$ be two $2$-forms
of type $4$ and class $C^\omega $ the $4$-jets
of which belong to $O_M^4$, and let
\begin{eqnarray*}
(X_1,X_2,X_3,X_4) & = &(Z_\Omega ,T_\Omega ^\prime ,
U_\Omega ^\prime ,V_\Omega ^\prime ), \\
(\bar{X}_1,\bar{X}_2,\bar{X}_3,\bar{X}_4) &=&
(Z_{\bar{\Omega }},T_{\bar{\Omega }}^\prime
,U_{\bar{\Omega }}^\prime ,V_{\bar{\Omega }}^\prime ),
\end{eqnarray*}
be their associated linear frames respectively as in
\emph{Theorem \ref{TheBasis}}. Let $\nabla $, $\bar{\nabla }$
be the linear connections parallelizing these linear frames
respectively; i.e.,
$\nabla _{X_i}X_j=0$, $\bar{\nabla }_{\bar{X}_i}\bar{X}_j=0$,
$i,j=1,\dotsc,4$. Finally, let $T$, $\bar{T}$
be the torsion tensors of $\nabla $, $\bar{\nabla }$,
respectively. The germs of $\Omega $ and $\bar{\Omega }$
at $x_0$ and $\bar{x}_0$ are equivalent if and only if
there exists a linear isomorphism $A\colon T_{x_0}M
\to T_{\bar{x}_0}M$ mapping the tensor $(\nabla ^rT)_{x_0}$
into $(\bar{\nabla }^r\bar{T})_{\bar{x}_0}$
for every $r\in \mathbb{N}$.
\end{corollary}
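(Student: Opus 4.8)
The plan is to deduce the corollary from Theorem~\ref{TheBasis} together with the known solution of the equivalence problem for absolute parallelisms, the bridge being the elementary fact that the connection $\nabla$ that parallelizes the frame is flat and that its torsion merely records the Lie brackets of the frame. First I would note that, by construction, $\nabla_{X_i}X_j=0$ for all $i,j$; hence the frame is $\nabla$-parallel, the curvature of $\nabla$ vanishes identically, and
\[
T(X_i,X_j)=\nabla_{X_i}X_j-\nabla_{X_j}X_i-[X_i,X_j]=-[X_i,X_j].
\]
Thus the components of $T$ in $(X_i)$ are minus the structure functions $c^k_{ij}$ defined by $[X_i,X_j]=c^k_{ij}X_k$, and since the connection coefficients are zero in this frame, covariant differentiation reduces to frame differentiation: the components of $\nabla^rT$ in $(X_i)$ are precisely the iterated derivatives $X_{i_1}\cdots X_{i_r}(c^k_{ij})$. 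The same holds for $(\bar X_i)$, $\bar\nabla$, $\bar T$. Because $\Omega$ has \emph{constant} coefficients $\Lambda(\Omega)$ in its frame, it is recovered from the parallelism, so by Theorem~\ref{TheBasis} the equivalence of the germs of $\Omega$ and $\bar\Omega$ is the same problem as the equivalence of the two parallelisms.

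For the direct implication I would argue by functoriality. If $\phi$ is a local diffeomorphism with $\phi^\ast\bar\Omega=\Omega$, then the functoriality of $\omega_\Omega$ and of $\varphi_\Omega$ recorded in Lemma~\ref{lemma_omega} and in \eqref{varphi} gives $\phi^{-1}\cdot Z_\Omega=Z_{\bar\Omega}$ and $\phi^{-1}\cdot T_\Omega=T_{\bar\Omega}$, whence $\phi^{-1}\cdot X_i=\bar X_i$ for every member of the modified frame. Since $\nabla$ is defined by the frame, $\phi$ carries $\nabla$ to $\bar\nabla$, hence $T$ to $\bar T$ and $\nabla^rT$ to $\bar\nabla^r\bar T$ for every $r$. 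Setting $A=d\phi_{x_0}$ then yields $A\cdot(\nabla^rT)_{x_0}=(\bar\nabla^r\bar T)_{\bar x_0}$ for all $r$, as required.

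The converse is the substantive part, and here I would invoke the equivalence theorem for real-analytic absolute parallelisms. The frame is an $\{e\}$-structure, so any frame-preserving diffeomorphism intertwines the flows of the $X_i$ and is therefore rigidly determined by the single condition $\phi(x_0)=\bar x_0$; consequently the complete local invariant of the germ is the full jet of the structure functions, that is, the tower $\{(\nabla^rT)_{x_0}\}_{r\ge0}$ read in the frame $(X_i)_{x_0}$. The hypothesis furnishes a linear isomorphism $A$ identifying this tower with $\{(\bar\nabla^r\bar T)_{\bar x_0}\}_{r\ge0}$, and in the analytic category this coincidence of jets is exactly the initial datum from which a local diffeomorphism $\phi$ with $\phi^{-1}\cdot X_i=\bar X_i$ can be integrated. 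Concretely one seeks the graph of $\phi$ as the integral leaf through $(x_0,\bar x_0)$ of the distribution spanned by $X_i\oplus\bar X_i$ on $M\times\bar M$; by Theorem~\ref{TheBasis} this leaf delivers $\phi^\ast\bar\Omega=\Omega$. The main obstacle is precisely this integration: the distribution $X_i\oplus\bar X_i$ is \emph{not} involutive when the $c^k_{ij}$ are non-constant, so one must prolong it by adjoining the structure functions and their successive frame-derivatives as extra dependent variables, verify that the closed system so obtained is Frobenius-integrable, and use analyticity to secure convergence. All of this is supplied by the cited solution of the parallelism equivalence problem; the only work specific to the present setting is the identification, carried out in the first paragraph, of $T$ and its covariant derivatives with the structure functions of the frame produced by Theorem~\ref{TheBasis}.
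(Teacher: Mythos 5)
Your proposal is correct and takes essentially the same route as the paper: reduce via Theorem~\ref{TheBasis} to the equivalence of the associated analytic linear frames and invoke the known solution of the equivalence problem for parallelisms --- the paper simply cites \cite[Corollary (5.6)]{GM} for the content you sketch (the identification of $\nabla ^rT$ with frame derivatives of the structure functions and the analytic integration of the prolonged system). One minor slip worth noting: the matrix of $\Omega $ with constant entries is the one in the \emph{primed} frame $(Z_\Omega ,T_\Omega ^\prime ,U_\Omega ^\prime ,V_\Omega ^\prime )$, not $\Lambda (\Omega )$, whose entries $J(\Omega )$, $Z_\Omega (J(\Omega ))$, $\Omega (U_\Omega ,V_\Omega )$ are non-constant --- but this is harmless, since your argument in any case recovers $\Omega $ from the frame by appealing to Theorem~\ref{TheBasis} rather than to that observation.
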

\begin{proof}
As the linear frames $(X_1,\dotsc,X_4)$,
$(\bar{X}_1,\dotsc,\bar{X}_4)$ are of class $C^\omega $,
according to \cite[Corollary (5.6)]{GM},
they are equivalent if and only if the map $A$ exists,
and we can conclude by virtue of Theorem \ref{TheBasis}.
\end{proof}
\begin{remark}
As $\nabla $ and $\bar{\nabla }$ are flat connections,
the statement of the previous corollary is a particular case
of the equivalence problem for linear connections,
such as is formulated, for example, in
\cite[VI. Theorem 7.2]{KN}; but there is an alternative
(and equivalent) formulation of this corollary that better
shows the computational complexity of the $2$-form equivalence
problem. For every integer $r\geq 0$, let
$I^i_{j_1,\dotsc,j_r,k,l}$ be the invariant functions
defined by the following formula:
\[
\left( \nabla ^rT\right)
\left(
(X_{j_1})_x,\dotsc,(X_{j_r})_x,(X_k)_x,(X_l)_x
\right)
=\sum\nolimits_iI^i_{j_1,\dotsc,j_r,k,l}
\left( j^{r+4}_x\Omega \right) (X_i)_x,
\]
for all $i,j_1,\dotsc,j_r,k,l=1,\dotsc,4$ and every
$2$-form $\Omega $ of type $4$ taking values in $O_M^4$.
Then, the germs $\Omega $ and $\bar{\Omega }$ at $x\in M$
are equivalent if and only if all the following equations hold:
\[
I^i_{j_1,\dotsc,j_r,k,l}\left( j^{r+4}_x\Omega \right)
=I^i_{j_1,\dotsc,j_r,k,l}\left( j^{r+4}_x\bar{\Omega }\right).
\]
\end{remark}

\noindent\textbf{Authors' addresses}

\smallskip

\noindent(J.M.M.) \textsc{Instituto de Tecnolog\'{\i}as
F\'{\i}sicas y de la Informaci\'on, CSIC, C/ Serrano 144,
28006-Madrid, Spain.}

\noindent\emph{E-mail:\/} \texttt{jaime@iec.csic.es}

\medskip

\noindent(L.M.P.C.) \textsc{Departamento de Matem\'atica
Aplicada a las T.I.C., E.T.S.I. Sistemas Inform\'aticos,
Universidad Polit\'ecnica de Madrid,
Carrete\-ra de Valencia, Km.\ 7, 28031-Madrid, Spain.}

\noindent\emph{E-mail:\/} \texttt{luispozo@etsisi.upm.es}
\end{document}